\theoremstyle{plain}
\newtheorem{theorem}{Theorem}[section]
\newtheorem{lemma}[theorem]{Lemma}
\newtheorem{corollary}[theorem]{Corollary}
\newtheorem{proposition}[theorem]{Proposition}
\theoremstyle{definition}
\newtheorem{definition}[theorem]{Definition}
\newtheorem{remark}[theorem]{Remark}
\newtheorem{example}[theorem]{Example}
\newcommand{\re}{\operatorname{Re}}
\newcommand{\im}{\operatorname{Im}}
\newcommand{\R}{\mathbb R}
\newcommand{\N}{\mathbb N}
\renewcommand{\H}{\mathbb H}
\newcommand{\Z}{\mathbb Z}
\newcommand{\E}{\mathbb E}
\newcommand{\C}{\mathbb C}
\newcommand{\g}{\mathfrak{g}}
\newcommand{\vphi}{\varphi}
\newcommand{\spn}{\mathrm{span}}
\newcommand{\vol}{\mathrm{vol}}
\newcommand{\id}{e}
\newcommand{\Lie}{\mathrm{Lie}}
\newcommand{\de}{\mathrm{d}}
\newcommand{\coloneq}{\coloneqq}
\newcommand{\exterior}[1]{\mathop{\mathpalette\exterior@{#1}}}
\newcommand{\exterior@}[2]{%
	\raisebox{0.7\depth}{%
		\fontsize{\sf@size}{0}%
		\m@th
		$\ifx#1\displaystyle\textstyle\else#1\fi\bigwedge$}%
	^{\mspace{-2mu}#2}%
	\kern-\scriptspace
}
\author[E. Le Donne, L. Nalon, S. Nicolussi Golo, S. Ryoo]{Enrico Le Donne, Luca Nalon, Sebastiano Nicolussi Golo,
Seung-Yeon Ryoo}
\address[Le Donne]{D\'epartement de Math\'ematiques, Ch. du mus\'ee 23, 1700 Fribourg (CH)}
\email[Le Donne]{enrico.ledonne@unifr.ch}
\address[Nalon]{SISSA, Via Bonomea 265, 34136 Trieste (IT)}
\email[Nalon]{lnalon@sissa.it}
\address[Nicolussi Golo]{D\'epartement de Math\'ematiques, Ch. du mus\'ee 23, 1700 Fribourg (CH)}
\email[Nicolussi Golo]{sebastiano.nicolussigolo@unifr.ch}
\address[Ryoo]{California Institute of Technology, 1200 E California Blvd., Pasadena, CA 91125 (USA)}
\email[Ryoo]{sryoo@caltech.edu}
\thanks{E. Le Donne, L. Nalon, and S. Nicolussi Golo were partially supported by the Swiss National Science Foundation 	(grant 200021-204501 `\emph{Regularity of sub-Riemannian geodesics and applications}').}
\thanks{L. Nalon was partially supported by the Swiss National Science Foundation Postdoc. Mobility Fellowship	(project number P500-2235462 `\emph{Lie groups of polynomial growth}').}
\thanks{S. Ryoo was partially supported by an AMS-Simons Travel Grant.}
\date{\today} 
\keywords{Sub-Riemannian geometry, asymptotic geometry, Carnot groups} 
\begin{document}
	
	\title[Asymptotics of step-2 nilpotent Lie groups]{Asymptotics of Riemannian Lie groups \\ with nilpotency step 2}
	
	\begin{abstract} We derive sharp estimates comparing asymptotic Riemannian or sub-Riemannian metrics in 2-step nilpotent Lie groups. For each metric, we construct a Carnot metric whose square remains at a bounded distance from the square of the original metric. 
 In particular, we deduce the analogue of a conjecture by Burago--Margulis: every 2-step nilpotent Riemannian Lie group is at bounded distance from its asymptotic cone.
 As a consequence, we obtain a refined estimate of the error term in the asymptotic expansion of the volume of (sub-)Riemannian metric balls. To achieve this, we develop a novel technique to efficiently perturb rectifiable curves modifying their endpoints in a prescribed vertical direction.
	\end{abstract}
	\maketitle
	\tableofcontents
	
	\section{Introduction}
 The large-scale geometry of metric groups of polynomial growth is closely related to their asymptotic cones; see \cite{MR623534} and also \cite{MR3267520}. 
 For Riemannian nilpotent Lie groups, asymptotic cones are sub-Riemannian Carnot groups; see \cite{donne2024metricliegroupscarnotcaratheodory, MR741395}.
 We stress that these asymptotic spaces are not Riemannian, unless they are commutative as groups.
 Consequently, it is more natural to consider the broad class of sub-Riemannian Lie groups.
 
 When a sub-Riemannian Lie group is simply connected and with nilpotency step $2$, its asymptotic cone can be realized by a left-invariant metric on the same group. Roughly speaking, this metric is defined as the infimum of the sub-Riemannian length over curves within a subclass called $\Delta_\infty$-horizontal curves; see \eqref{def delta infty}. We refer to this metric as the \emph{canonical asymptotic metric} and we will discuss its details in \cref{section_asymptotic_metric}.

Our first main result provides a sharp estimate comparing the sub-Riemannian metric and its canonical asymptotic metric in each $2$-step nilpotent Lie group.

\begin{theorem} \label{main theorem}
Let $G$ be a simply connected, $2$-step nilpotent Lie group equipped with a left-invariant sub-Riemannian metric $d$. Denote by $d_\infty$ the canonical asymptotic metric on $G$. Then, there exists $C > 0$ such that
\begin{equation} \label{main equation}
\lvert d_\infty(p,q)^2 - d(p,q)^2 \rvert \leq C, \quad \text{for every } p,q \in G.
\end{equation}
In particular
\begin{equation} \label{first consequence}
 \lvert d_\infty(p,q) - d(p,q) \rvert = O\left(\frac{1}{d(p,q)}\right) \to 0, \quad \text{as $d(p,q) \to \infty$.}
\end{equation}
\end{theorem}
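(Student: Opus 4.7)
The inequality $d(p,q) \leq d_\infty(p,q)$ is immediate, since $\Delta_\infty$-horizontal curves form a subclass of $\Delta$-horizontal ones on which the two length functionals agree. The remaining task is therefore the reverse bound $d_\infty(p,q)^2 \leq d(p,q)^2 + C$. Once \eqref{main equation} is established, the consequence \eqref{first consequence} will follow from the elementary identity
\[
d_\infty(p,q) - d(p,q) \;=\; \frac{d_\infty(p,q)^2 - d(p,q)^2}{d_\infty(p,q) + d(p,q)} \;\leq\; \frac{C}{2\, d(p,q)}.
\]

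To attack the main inequality I would reduce to $p = e$ by left invariance and seek to modify a unit-speed sub-Riemannian geodesic $\gamma \colon [0, L] \to G$ from $e$ to $q$ (with $L = d(e,q)$) into a $\Delta_\infty$-horizontal curve from $e$ to $q$ whose squared length exceeds $L^2$ by at most a constant. Writing $\g = V_1 \oplus V_2$ with $V_2 = [\g,\g]$, the first step is a projection. Decomposing $\Delta_0$ as a direct sum of the purely vertical subspace $W = \Delta_0 \cap V_2$ and a complement $\g_\infty$ that maps isomorphically onto $V_1$ (and serves as the model for $\Delta_\infty$), I would delete the $W$-component of the left-translated velocity of $\gamma$. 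The output is a $\Delta_\infty$-horizontal curve $\tilde\gamma$ of length at most $L$ whose endpoint differs from $q$ only by a vertical shift $\eta \in V_2$, with the crude bound $|\eta| \leq L$ coming from unit speed.

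The second and crucial step is the novel perturbation technique advertised in the abstract. I would add a distributed transverse horizontal perturbation $\varepsilon\, W_0(t)$ to the velocity of $\tilde\gamma$, spread over the entire interval $[0,L]$, chosen so that the induced $V_2$-displacement cancels $\eta$. In a $2$-step nilpotent group this displacement is governed at leading order by
\[
\Delta\xi \;=\; \int_0^L \tfrac{\varepsilon}{2}\, [x(t), W_0(t)] \, \de t \;+\; O(\varepsilon^2),
\]
where $x(t) = \pi_{V_1}\tilde\gamma(t)$. Since $|x(t)|$ typically grows linearly in $t$, this integral has order $\varepsilon L^2$, so a choice with $\varepsilon \sim |\eta|/L^2$ suffices; the resulting squared-length increase is of order $\varepsilon^2 L^2 = O(|\eta|^2/L^2) = O(1)$, producing the claimed uniform constant $C$. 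The regime of bounded $L$ is handled separately by continuity and compactness.

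The main obstacle I anticipate is verifying that the perturbation can realize every prescribed $\eta \in V_2$: this is a surjectivity question for the linear map $W_0 \mapsto \int_0^L [x(t), W_0(t)] \, \de t$, controlled by the linear span of $\{x(t) : t \in [0,L]\}$ in $V_1$. When the geodesic $x$ lies in a proper subspace, one has to first enlarge its span by a preliminary bounded modification, exploiting the bracket-generating identity $[V_1, V_1] = V_2$ that holds in any $2$-step nilpotent algebra, and making this enlargement compatible with the $O(1)$ squared-length budget is, I expect, the technical heart of the argument.
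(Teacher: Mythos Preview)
Your perturbation mechanism has a genuine gap. You rely on the cross-bracket $\int_0^L [x(t), W_0(t)]\,\de t$ being of order $\varepsilon L^2$, justified by ``$|x(t)|$ typically grows linearly in $t$''. But this fails for a large class of minimizing geodesics: by the explicit geodesic formula (\cref{geodeic_equation}), the horizontal projection is $x(t) = c - e^{tM}c + tb$ with $M \in \mathfrak{so}(V)$, and when $|b|$ is small compared to $L$ the curve $x(t)$ is a bounded superposition of circular motions. In that regime $|x(t)|$ may still be $O(L)$ in magnitude, but the integral against a bounded $W_0$ suffers oscillatory cancellation and need not achieve order $L^2$; the surjectivity obstacle you flag is therefore not about the linear span of $\{x(t)\}$ but about its frequency content, which your proposed ``preliminary bounded modification'' does not address. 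You also implicitly need $W_0$ to be $L^2$-orthogonal to $\dot x$ to get squared-length increase $\varepsilon^2 L^2$ rather than $\varepsilon L^2$, and you have not argued that this is compatible with the surjectivity constraint.

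The paper's argument uses a different mechanism altogether. Rather than the cross-bracket with $x$, it adds \emph{closed} perturbation loops $\sigma_k$ whose \emph{self}-bracket (enclosed area) produces the prescribed vertical shift; the cross-terms $\int [\sigma_k, \dot x]$ and $\int \rho(\dot\sigma_k, \dot x)$ are killed \emph{exactly} by choosing $\sigma_k$ to be $L^2$-orthogonal to the finitely many trigonometric frequencies appearing in $\dot x$ and $\ddot x$ (this is where the explicit geodesic equation is indispensable, and is the content of \cref{key lemma}). The resulting energy cost is $C|\zeta|$, linear in the vertical shift and independent of the shape of $x$ (\cref{powerful_lemma}). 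That alone gives only squared-distance increase $\le C|\zeta| \le CL$; but feeding \cref{powerful_lemma} back into the minimality of $\gamma$ yields the a~posteriori bounded-drift estimate $|\zeta| \le C$ (\cref{bounded drift}), which then closes the argument. Your crude bound $|\eta| \le L$ cannot substitute for this step.
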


The interest in obtaining \eqref{first consequence} is in its implications to the volume growth and to the rate of convergence towards the asymptotic cone. 
Namely, \Cref{main theorem} implies that such sub-Riemannian Lie groups are at bounded distance from their asymptotic cones,
in terms of the Gromov-Hausdorff distance; see \cref{prop_rate_convergence}. In fact, the bound \eqref{first consequence} implies that on every $2$-step nilpotent Riemannian Lie group $(G,d)$ there exists a Carnot metric $d'$ on $G$ and $C>0$ such that
\begin{equation} \label{bound metrics intro}
 \lvert d'(p,q) - d(p,q) \rvert \le C \quad \text{for every $p,q \in G$.}
\end{equation}
We refer to \cref{bounded distance cor} for details. 
Even if \eqref{bound metrics intro} has been conjectured for all groups by Burago--Margulis \cite{MR2332168}, analogous results are known to fail in the more general sub-Finsler setting and in nilpotent finitely generated groups. Indeed, in \cite{MR3153949}, one finds an example of a $2$-step nilpotent Finsler Lie group that is not at bounded distance from its asymptotic cone. However, in \cite{MR4380060}, property \eqref{bound metrics intro} is established for nilpotent groups whose asymptotic cones are strongly bracket-generating nilpotent sub-Finsler Lie groups. It is worth noting that the latter condition restricts the nilpotency step to be at most $2$; see \cite[Appendix~A]{MR3739202} for details. Nevertheless, we point out that \eqref{first consequence} does not extend to the Finsler strongly bracket-generating setting. In fact, in \cref{no bounded carnot}, we will exhibit a Finsler metric $d_F$ on the Heisenberg group such that no Carnot metric $d$ satisfies 
\begin{equation} \label{no vanishing}
 \lvert d(p,q) - d_F(p,q) \rvert \to 0, \quad \text{as $d(p,q) \to \infty$.}
\end{equation}
As a consequence, our results highlight a fundamental difference between Riemannian and Finsler metrics in $2$-step nilpotent Lie groups in terms of their asymptotic geometries.

As a second consequence,
\cref{main theorem} refines the asymptotic expansion of the volume growth of metric balls; see \cref{asymptotic_sphere}. Specifically, we will deduce that in every $2$-step nilpotent Riemannian group $(G,d)$, the asymptotic expansion of the Haar measure $\operatorname{vol}$ of the metric ball $B(r)$ of radius $r$ is given by
\begin{equation} \label{asymptotic_expansion}
\operatorname{vol}(B(r)) = C r^Q + O(r^{Q-2}), \quad \text{as $r \to \infty$,} 
\end{equation}
where $Q$ is the Hausdorff dimension of the asymptotic cone of $(G,d)$. Estimate \eqref{asymptotic_expansion} improves previously known results for $2$-step nilpotent Lie groups established in \cite{MR3153949, MR4380060}. We also obtain a rigidity result by showing that the error term in \eqref{asymptotic_expansion} cannot be $o(r^{Q-2})$, unless $(G,d)$ is a Carnot group, and we deduce the sharpness of \cref{main theorem}; see \cref{sharpness}.

The aforementioned rigidity behavior of 2-step sub-Riemannian metrics does not generalize to sub-Finsler metrics. Indeed, in \cref{prop_ellone}, we will exhibit a Finsler metric $d_F$ and a Carnot metric $d$ on the Heisenberg group $\H$ for which 
$$ d_F(p,q) = d(p,q), \quad \text{for every $p,q \in \H$ such that $d(p,q) \ge 1$.}$$

Our proof of \cref{main theorem} builds upon the explicit formula, in $2$-step nilpotent Lie groups, for absolutely continuous curves in terms of their control; see \cref{lemma integral}. Given an absolutely continuous curve, we develop a technique to perturb its endpoint by a prescribed vertical direction. A key feature of this technique is that it provides a bound, independent of the initial curve, on the energy difference between the original and the perturbed curves. We refer to \cref{new_lemma} for details.

We also apply the same technique to obtain a rigidity result for asymptotic sub-Riemannian metrics. Two left-invariant metrics $d$ and $d'$ defined on a Lie group $G$ are \emph{asymptotic} if
$$ \frac{d'(p,q)}{d(p,q)} \to 1, \quad \text{as $d(p,q) \to \infty$.}$$
On the one hand, every sub-Riemannian metric in a nilpotent Lie group is asymptotic to its canonical asymptotic metric by Pansu's work \cite{MR741395}; see \cref{prop_asymptotic}. We point out that Pansu's metric and the canonical asymptotic metric may differ in nilpotent groups of higher step, as explained in \cref{pansu}.
On the other hand, two asymptotic sub-Riemannian metrics may still have distinct canonical asymptotic metrics, as for example, two different Carnot metrics with the same abelianization norm. Nevertheless, we establish the following result, which is meaningful and new also in the Riemannian case.

\begin{theorem} \label{equivalence quasi_isometries intro}
Let $G$ be a simply connected, $2$-step nilpotent Lie group equipped with sub-Riemannian metrics $d$ and $d'$. Then, the metrics $d$ and $d'$ are asymptotic if and only if there exists $C > 0$ such that
\begin{equation*}
\lvert d'(p,q) - d(p,q) \rvert \leq C, \quad \text{for every } p,q \in G.
\end{equation*}
\end{theorem}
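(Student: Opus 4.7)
One direction is immediate: if $|d'(p,q)-d(p,q)|\le C$ pointwise, then $d'(p,q)/d(p,q)\to 1$ as $d(p,q)\to\infty$, so $d$ and $d'$ are asymptotic. The substance of the theorem lies in the converse.

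Assuming that $d$ and $d'$ are asymptotic, the plan is to compare both metrics to their canonical asymptotic metrics via \Cref{main theorem}. That theorem, applied to each, yields constants $C_1,C_2>0$ with $|d^2-d_\infty^2|\le C_1$ and $|(d')^2-(d'_\infty)^2|\le C_2$, where $d_\infty$ and $d'_\infty$ are the respective canonical asymptotic metrics on $G$. Factoring each difference of squares and splitting the cases $d+d_\infty\ge\sqrt{C_1}$ versus $d+d_\infty<\sqrt{C_1}$ (and analogously for the primed version) promotes these to the pointwise bounds $|d-d_\infty|\le\sqrt{C_1}$ and $|d'-d'_\infty|\le\sqrt{C_2}$. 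By the easy direction already handled, these bounds imply that $d$ is asymptotic to $d_\infty$ and that $d'$ is asymptotic to $d'_\infty$; transitivity of the asymptotic relation, combined with the hypothesis, then yields that $d_\infty$ and $d'_\infty$ are themselves asymptotic.

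The remaining---and essentially only unpackaged---step is to show that two asymptotic left-invariant Carnot metrics on $G$ must coincide. Since $G$ is simply connected and $2$-step nilpotent, both $d_\infty$ and $d'_\infty$ are $1$-homogeneous under the intrinsic Carnot dilations $\{\delta_t\}_{t>0}$ of $G$, so for any $p\in G\setminus\{\id\}$ and any $t>0$ one has $d_\infty(\id,\delta_t p)=t\,d_\infty(\id,p)$ and the analogous identity for $d'_\infty$. As $t\to\infty$ the distance $d_\infty(\id,\delta_t p)$ tends to infinity, so the quotient $d'_\infty(\id,\delta_t p)/d_\infty(\id,\delta_t p)=d'_\infty(\id,p)/d_\infty(\id,p)$ must tend to $1$ by the asymptotic relation; being independent of $t$, it already equals $1$, giving $d_\infty(\id,p)=d'_\infty(\id,p)$. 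Left-invariance extends this to $d_\infty=d'_\infty$ on all of $G\times G$. The triangle inequality $|d-d'|\le|d-d_\infty|+|d_\infty-d'_\infty|+|d'_\infty-d'|\le\sqrt{C_1}+\sqrt{C_2}$ then concludes the proof.

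The hard part is entirely absorbed into \Cref{main theorem}; within the present argument itself the only substantive idea is the dilation-homogeneity rigidity of Carnot metrics established in the previous paragraph, which amounts to a one-line application of $1$-homogeneity.
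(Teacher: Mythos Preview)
Your argument has a genuine gap at the step where you claim that ``two asymptotic left-invariant Carnot metrics on $G$ must coincide.'' This is false, and the paper says so explicitly in the introduction: two different Carnot metrics with the same abelianization norm are asymptotic but distinct. The flaw in your reasoning is the phrase ``the intrinsic Carnot dilations $\{\delta_t\}_{t>0}$ of $G$.'' There is no such intrinsic family. A one-parameter family of dilations on a $2$-step nilpotent $G$ is determined by a choice of complement $V$ to $[\g,\g]$ in $\g$ (acting as $tv+t^2w$ for $v\in V$, $w\in[\g,\g]$), and the canonical asymptotic metrics $d_\infty$ and $d'_\infty$ are built from \emph{different} complements $V$ and $V'$---namely the $\rho$-orthogonal complement of $\Delta_\id\cap[\g,\g]$ inside $\Delta_\id$, and the analogous $\rho'$-orthogonal complement inside $\Delta'_\id$. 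Each metric is $1$-homogeneous only under its own dilations, so the ratio $d'_\infty(\id,\delta_t p)/d_\infty(\id,\delta_t p)$ is not constant in $t$ and your rigidity step does not go through. Concretely, on the Heisenberg group the Carnot metrics coming from $V=\mathrm{span}\{X,Y\}$ and $V'=\mathrm{span}\{X+Z,Y\}$ (with matching abelianization norms) are asymptotic but unequal.

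This is exactly the difficulty the paper has to work around. After reducing to Carnot metrics $d_\infty$, $d'_\infty$ with the same abelianization norm (your reduction up to that point is fine), the paper invokes \cref{same projection} to produce an automorphism $\vphi$ of $G$ that is an isometry $(G,d_\infty)\to(G,d'_\infty)$ and has the special shape $\vphi(p)=p\cdot\exp(L(\pi_\mathrm{ab}(p)))$ for a linear $L\colon G_\mathrm{ab}\to[\g,\g]$. One then needs to compare $d_\infty(\id,q)$ with $d_\infty(\id,\vphi(q))=d'_\infty(\id,q)$, i.e.\ to control a vertical shift of size $|L(\pi_\mathrm{ab}(q))|\lesssim d_\infty(\id,q)$. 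The triangle inequality plus Ball--Box only gives an error of order $\sqrt{d_\infty(\id,q)}$, which is unbounded; the sharper estimate of \cref{powerful_lemma} (the perturbation lemma for geodesics) is what upgrades this to a bounded error. So the ``hard part'' is \emph{not} entirely absorbed into \cref{main theorem}; a second, independent application of \cref{powerful_lemma} is needed precisely where your dilation argument breaks down.
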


\Cref{equivalence quasi_isometries intro}, with its more general form stated in \Cref{equivalence quasi_isometries}, extends results previously known for the Heisenberg group (see \cite[Theorem 1.2]{MR3673666}) to the broader class of $2$-step nilpotent Lie groups. The asymptotic condition is well understood in terms of the abelianization norms induced by the metrics; see \cite[Proposition~3]{MR3153949}. As a consequence, we establish that a Lie group automorphism 
$\vphi:G\to G$
is a rough isometry for a sub-Riemannian 2-step nilpotent Lie group $G$, i.e.,
 there exists $C>0$ such that $$\lvert d(\vphi(p),\vphi(q)) - d(p,q)) \rvert \le C, \quad \text{for every $p,q \in G$},$$
if and only if the abelianization map of $\vphi$ is a Euclidean isometry. 
Using this, we construct examples of Lie group automorphisms that are rough isometries but are not at bounded distance from any isometry; note the difference with finite-dimensional normed vector spaces \cite[Theorem~3]{MR511409}. We discuss these results in \cref{vast class}.


We point out that \cref{main theorem} is informative of the horoboundary of such spaces. Thanks to \eqref{first consequence}, the metrics $d$ and $d_\infty$ have the same horoboundary; see \cite[Section 5]{MR3673666} for a detailed description of this latter fact. We obtain the following consequence.

\begin{corollary}\label{equivalence of horoboundary}
Let $G$ be a simply connected, $2$-step nilpotent Lie group equipped with a sub-Riemannian metric $d$.
Denote by $d_\infty$ the canonical asymptotic metric on $G$.
Then, the horoboundary of $(G,d)$ coincides with the horoboundary of $(G,d_\infty)$.
\end{corollary}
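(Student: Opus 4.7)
The plan is to use \eqref{first consequence} from \Cref{main theorem} to show that the horofunctions of $(G,d)$ and $(G,d_\infty)$ coincide as subsets of $C(G)/\R$, after fixing the identity $\id$ as basepoint. Recall that a horofunction arises as a limit, uniform on compact sets, of a sequence of normalized distance functions $h^d_{x_n}(y) := d(y,x_n) - d(\id,x_n)$ for some sequence $x_n \in G$ with $d(\id,x_n) \to \infty$, and likewise for $d_\infty$. Since \eqref{main equation} already guarantees that $d$ and $d_\infty$ differ by a bounded amount, $d(\id,x_n) \to \infty$ if and only if $d_\infty(\id,x_n) \to \infty$, so the two metrics generate horofunctions from exactly the same sequences.

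The core step is to observe that for any compact set $K \subset G$ and any sequence $x_n$ escaping to infinity, we have $d(y,x_n) \to \infty$ uniformly in $y \in K$ by the triangle inequality, and the same holds with $d_\infty$. Therefore \eqref{first consequence} yields $|d(y,x_n) - d_\infty(y,x_n)| \to 0$ and $|d(\id,x_n) - d_\infty(\id,x_n)| \to 0$, uniformly for $y \in K$. Subtracting, one finds that
\[
h^d_{x_n}(y) - h^{d_\infty}_{x_n}(y) = \bigl[d(y,x_n) - d_\infty(y,x_n)\bigr] - \bigl[d(\id,x_n) - d_\infty(\id,x_n)\bigr] \longrightarrow 0
\]
uniformly on compact subsets of $G$. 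Hence $h^d_{x_n}$ converges to a function $h$ uniformly on compacts if and only if $h^{d_\infty}_{x_n}$ converges to the same $h$. This equivalence shows that the sets of accumulation points of these families in $C(G)$ (hence in $C(G)/\R$) coincide, so the horoboundaries of $(G,d)$ and $(G,d_\infty)$ are equal.

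There is essentially no obstacle once \Cref{main theorem} is granted; the only care needed is to confirm that the topology on $G$ underlying the horoboundary construction is the same for both metrics (which is clear since $d$ and $d_\infty$ are at bounded distance and both proper, left-invariant, and induce the manifold topology), so that convergence of $h^d_{x_n}$ and $h^{d_\infty}_{x_n}$ takes place in the same function space $C(G)/\R$. A reference to the detailed treatment in \cite[Section~5]{MR3673666}, which carries out exactly this type of argument in the Heisenberg setting, completes the proof.
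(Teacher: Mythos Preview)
Your argument is correct and follows exactly the approach the paper indicates: the paper itself does not give a standalone proof of this corollary but simply notes that \eqref{first consequence} forces the horofunctions of $d$ and $d_\infty$ to agree, deferring to \cite[Section~5]{MR3673666} for the details you have spelled out. Your write-up is a faithful expansion of that reference, with no gaps.
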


The usefulness of \cref{equivalence of horoboundary} stems from the fact that horoboundaries of Carnot groups are better understood than those of general (sub-)Riemannian Lie groups. Indeed, since the canonical asymptotic metric of $2$-step nilpotent Lie groups coincides with the metric of a Carnot group, its horoboundary can be effectively described using the techniques developed in~\cite{MR4237371}. In particular, see~\cite[Sections 2.6 and 3.3]{MR4237371}, as well as~\cite{fisher2024metricboundarytheorycarnot}.




The paper is organized as follows. In \cref{preliminaries}, we recall fundamental concepts on sub-Riemannian metrics in Lie groups and present the construction of the canonical asymptotic metric. In \cref{main}, we develop our new perturbation technique for curves (\cref{new_lemma}) and use it to establish an improvement of the Ball-Box Theorem; see \cref{powerful_lemma}. The latter plays a central role in proving \cref{main theorem} and \cref{equivalence quasi_isometries intro}. In \cref{consequences}, we examine the implications of these results for the rate of convergence to the asymptotic cone and the volume growth of metric balls. Additionally, we compute the volume of Riemannian balls in the Heisenberg group, for which a refined version of \eqref{asymptotic_expansion} holds.
Finally, we present an example of a Riemannian metric on the Engel group that is not at bounded distance from its canonical asymptotic metric. The latter example suggests that different techniques may be necessary to address similar questions in higher-step nilpotent groups.

 \emph{Acknowledgments.} The first-named author thanks Michael Freedman, Gabriel Pallier, and Pierre Pansu for discussions on earlier versions of these results. All authors are also grateful to Jeremy Tyson for valuable comments.

	\section{Preliminaries} \label{preliminaries}
	
	\subsection
 {Sub-Riemannian metrics on Lie groups}
 We follow standard references for sub-Rie\-man\-nian geometry: \cite{MR3971262,donne2024metricliegroupscarnotcaratheodory,MR1867362}.
	Let $G$ be a connected Lie group, with identity element $\id$ and associated Lie algebra $\g \coloneq T_\id G$. For $p \in G$, we denote by $L_p$ the left-translation by $p$. A subbundle $\Delta$ of the tangent bundle $TG$ is \emph{left-invariant} whenever
	\begin{equation*}
		\Delta_{p \cdot q} = \de L_p(\Delta_q) \quad \text{for every $p,q \in G$.}
	\end{equation*}
	A left-invariant subbundle $\Delta \subseteq TG$ is \emph{bracket-generating} if $\Lie(\Delta_\id)=\g$, where $\Lie(\Delta_\id)$ is the Lie algebra generated by $\Delta_\id$. Hereafter, we refer to a left-invariant, bracket-generating subbundle $\Delta$ as \emph{horizontal distribution}. Fixed some $\Delta \subseteq TG$, we say that an absolutely continuous curve $\gamma \colon [0,1] \to G$ is \emph{horizontal} (or \emph{$\Delta$-horizontal}) if $\dot\gamma(t)\in \Delta_{\gamma(t)}$ for a.e. $t\in[0,1]$.
	
	Consider a horizontal distribution $\Delta \subseteq TG$ and fix an inner product $\rho_\id$ on $\Delta_\id \subseteq \g$. The inner product $\rho_\id$ extends by left-translation to a metric tensor $\rho$ on $\Delta$, i.e.,
	\begin{equation*}
		\rho_p(X,Y) \coloneq \rho_\id(\de L_p^{-1}(X),\de L_p^{-1}(Y)) \quad \text{for every $p \in G$ and $X,Y \in \Delta_p$.}
	\end{equation*}
	We say that $(\Delta,\rho)$ is a \emph{sub-Riemannian structure} on $G$. 
	
	The choice of a sub-Riemannian structure endows $G$ with the structure of metric space, with \emph{sub-Riemannian metric} defined by setting
	\begin{equation} \label{cc distance}
		d(p,q) = \inf \Set{\ell(\gamma) | \text{$\gamma \colon [0,1] \to G$ is a $\Delta$-horizontal curve, $p,q \in \gamma([0,1])$}},
	\end{equation}
	where $\ell(\gamma)\coloneq\int_0^1 \sqrt{\rho(\dot{\gamma}(t),\dot{\gamma}(t))} \, \de t$ denotes the \emph{length} of the horizontal curve $\gamma$.
	The bracket-generating condition ensures that \eqref{cc distance} indeed defines a distance that induces the manifold topology; in particular, it is finite. Since $\ell(L_h \circ \gamma)=\ell(\gamma)$, the distance $d$ is \emph{left-invariant}:
	\begin{equation*}
		d(h \cdot p,h \cdot q)=d(p,q) \quad \text{for every $h,p,q \in G$.}
	\end{equation*} 	
	Moreover, $G$ is complete and the distance $d$ is geodesic, meaning that the infimum in \eqref{cc distance} is always attained; see \cite[Theorem~7.1.7]{donne2024metricliegroupscarnotcaratheodory}. We say that a $\Delta$-horizontal curve is \emph{length-minimizing} (or \emph{$d$-minimizing}) if it realizes the infimum in \eqref{cc distance}.
	We refer to this class of metric spaces as \emph{sub-Riemannian Lie groups}. The special case $\Delta=TG$ corresponds to the well-known class of \emph{Riemannian Lie groups}. 

	\subsection{Nilpotent Lie groups and asymptotic metrics} \label{section_asymptotic_metric}
 Nilpotent Lie groups, equipped with a sub-Riemannian structure, admit a unique asymptotic cone in the Gromov-Hausdorff sense; see \cite{MR741395}. The resulting asymptotic cone is again a sub-Riemannian Lie group. If $G$ is a simply connected, $2$-step nilpotent Lie group, the asymptotic cone can be realized by a sub-Riemannian structure on $G$ itself, following a canonical construction that we now present.
	
\begin{definition}[Canonical asymptotic metric]
Let $G$ be a nilpotent simply connected Lie group with associated Lie algebra $\g$. Equip $G$ with a sub-Riemannian structure $(\Delta,\rho)$ and denote by $d$ the corresponding sub-Riemannian metric.
 We define 
	\begin{equation} \label{def V}
		V \coloneq V_{(\Delta,\rho)} \coloneq \Set{v \in \Delta_\id | \text{$\rho(v,w)=0$ for every $w \in \Delta_\id \cap [\g,\g]$}},
	\end{equation}
	\begin{equation} \label{def delta infty}
		\Delta_\infty \coloneq \bigcup_{p \in G} \de L_p(V) \subseteq \Delta,
	\end{equation}
 and, for each $p,q\in G$, we define 
	\begin{equation} \label{asymptotic metric}
		d_\infty(p,q) \coloneq \inf \Set{\ell(\gamma) | \text{$\gamma \colon [0,1] \to G$ is a $\Delta_\infty$-horizontal curve, $p,q \in \gamma([0,1])$}} .
	\end{equation}
We refer to $d_\infty$ as the \emph{canonical asymptotic metric} of the sub-Riemannian Lie group $(G,d)$.
\end{definition}
 \begin{proposition} \label{prop_asymptotic}
 Let $G$ be a nilpotent simply connected Lie group, equipped with a sub-Riemannian structure $(\Delta,\rho)$. Denote by $d$ the sub-Riemannian metric and by $d_\infty$ its canonical asymptotic metric. Then, the function $d_\infty$ is the left-invariant sub-Riemannian metric on $G$ associated with the sub-Riemannian structure $(\Delta_\infty,\rho|_{\Delta_\infty})$, the metric
 $d_\infty$ is asymptotic to $d$, and
 \begin{equation} \label{obvious bound}
 d(p,q) \le d_\infty(p,q), \quad \text{for every $p,q \in G$.}
 \end{equation}
 \end{proposition}
\begin{proof}
 By an easy fact (see \cite[Exercise~10.5.7]{donne2024metricliegroupscarnotcaratheodory}), since $\Delta$ is bracket-generating, we have
 \begin{equation*}
 \g= \Delta_\id + [\g,\g] .
 \end{equation*}
	In particular, we have obtained that 
	\begin{equation} \label{v generates}
	\g = V \oplus [\g,\g].
	\end{equation} Consequently, the subspace $V$ and 
 the left-invariant subbundle $\Delta_\infty$
	are bracket-generating; see \cite[Exercise~10.5.8]{donne2024metricliegroupscarnotcaratheodory}. Thus, the function $d_\infty$ is the sub-Riemannian metric associated with $(\Delta_\infty,\rho|_{\Delta_\infty})$. 

 We consider the projection $\pi \colon \g \to V$ parallel to $[\g,\g]$. We observe that $\rho$ is an inner product on $\Delta_\id$ and $V$ is orthogonal to $[\g,\g]$. Therefore, 
	$$ \pi \left( \Set{v \in \Delta_e | \rho(v,v) \le 1} \right) = \Set{v \in V | \rho(v,v) \le 1}. $$ 
 By the general result \cite[Proposition~3]{MR3153949}, the distances are asymptotic. 

 Since $\Delta_\infty \subseteq \Delta$, every $\Delta_\infty$-horizontal curve is also $\Delta$-horizontal. Therefore, we conclude \eqref{obvious bound}.
\end{proof}

The construction of the canonical asymptotic metric is in the spirit of Pansu's description of the asymptotic cone in \cite{MR741395}. In general, to construct the Pansu metric, one takes any $V$ transverse to $[\g,\g]$, together with an adapted linear grading; see \cite[Section~12.4]{donne2024metricliegroupscarnotcaratheodory}.

 \begin{proposition}\label{pansu}
 Given a sub-Riemannian, nilpotent, simply connected Lie group $(G,d)$, its canonical asymptotic space $(G,d_\infty)$ is isometric to the asymptotic cone of $(G,d)$ if and only if the subspace $V$, defined as in \eqref{def V}, is the first layer of a stratification of the Lie algebra of $G$. 
 \end{proposition}
\begin{proof}
On the one hand, assume that the subspace $V$ is the first layer of a stratification. In this case, the pair $(G, d_\infty)$ is a Carnot group. Moreover, the metric spaces $(G, d_\infty)$ and $(G,d)$ have the same asymptotic cones thanks to \cref{prop_asymptotic}. Being a Carnot group, the only asymptotic cone of $(G, d_\infty)$ is itself.

On the other hand, if there is an isometry between $(G,d_\infty)$ and the asymptotic cone $(\hat G,\hat d)$ of $(G,d)$, then by \cite{MR3646026} the sub-Riemannian Lie groups $(G,d_\infty)$ and $(\hat G, \hat d)$ are isomorphic via an isometry. In particular, since $(\hat G, \hat d)$ is a Carnot group by \cite{MR741395}, the subspace $V$ must be the first layer of a stratification.
\end{proof}
 \begin{remark} We stress that, if the nilpotency step is two, each subspace $V$ in direct sum with the derived subalgebra is the first layer of a stratification. In this case, the construction of the canonical asymptotic metric is a particular case of Pansu's construction of the asymptotic cone in \cite{MR741395}.

 Nevertheless, examples of nilpotent Lie groups on which there is a subspace $V$ in direct sum with the derived subalgebra that is not the first layer of a stratification may be found in \cite{MR3793294, MR4490195} and \cite[Exercise~9.5.28]{donne2024metricliegroupscarnotcaratheodory}. Therefore, according to \cref{pansu}, when the nilpotency step is higher than two, the canonical asymptotic space $(G,d_\infty)$ may not be isometric to the asymptotic cone of $(G,d)$. 
 \end{remark}

\subsection{Models for sub-Riemannian 2-step nilpotent Lie groups} \label{model}
Every simply connected, $2$-step nilpotent Lie group with associated Lie algebra $\g$ is isomorphic to 
\begin{equation} \label{def G}
 G:= (\g,*), \quad \text{where} \quad x*y \coloneq x+y+\tfrac{1}{2}[x,y], \qquad \text{ for all } x,y \in \g.
\end{equation}
Moreover, the Lie algebra associated with $(\g,*)$ is $\g$ and the exponential map $\exp \colon \g \to (\g,*)$ is the identity map; see \cite[Theorem~11.2.6]{hilgert2011structure}.

Next, fix a sub-Riemannian structure $(\Delta,\rho)$ on $G$ as in \eqref{def G}. We recall the standard correspondence between horizontal curves and their controls; see \cite[Section~7.2.1]{donne2024metricliegroupscarnotcaratheodory} for details. For every $u \in L^1([0,1],\g)$, we denote by $\gamma_u$ the absolutely continuous curve $\gamma \colon [0,1] \to G$ such that 
\begin{equation} \label{system}
 \begin{cases}
 \gamma(0)=\id, \\ \dot{\gamma}(t)= \de L_{\gamma(t)}(u(t)), \quad \text{for a.e. $t \in [0,1]$.}
 \end{cases}
\end{equation}
Vice versa, for every absolutely continuous curve $\gamma \colon [0,1] \to G$ such that $\gamma(0)=\id$, there exists a unique $u \in L^1([0,1],\g)$ such that \eqref{system} holds. We call $u$ the {\em control} of $\gamma$. The curve $\gamma_u$ is $\Delta$-horizontal if and only if $u(t) \in \Delta$ for a.e. $t \in [0,1]$.

 As a simple consequence of the group law \eqref{def G} of $G$, we get the following formula for the curve $\gamma_u$.

\begin{remark} \label{lemma integral}
	Let $G$ be as in \eqref{def G}, and let $\gamma_u \colon [0,1] \to G$ be an absolutely continuous curve such that \eqref{system} holds. Then
	\begin{equation} \label{formula integral}
		\gamma_u(1) = \int^1_0 u(t) \, \de t + \frac{1}{2} \int^1_0 \left[\int^t_0 u(s) \, \de s, u(t) \right] \, \de t. 
	\end{equation}
\end{remark}

We include here two results that we will need later in this section.

\begin{lemma} \label{lemma_wedge}
Fix $G$ as in \eqref{def G}.
	Let $u,v \in L^1([0,1],\g)$. Assume that 
	\begin{equation*} 
			\int^1_0 v(t) \, \de t = 0 \quad \text{and} \quad \int^1_0 \left[\int^t_0 u(s) \, \de s , v(t)\right] \de t = 0.
	\end{equation*}
	Then $\gamma_{u+v}(1)=\gamma_u(1) + \gamma_v(1)$.
\end{lemma}

\begin{proof}
Integrating by parts, we get
\begin{equation} \label{parts}
 \int^1_0 \left[\int^t_0 v(s) \, \de s , u(t)\right] \de t = - \int^1_0 \left[ v(t), \int^t_0 u(s) \de s \right] \de t = \int^1_0 \left[\int^t_0 u(s) \, \de s , v(t)\right] \de t,
\end{equation}
where we used the fact that $\int^1_0 v(t) \, \de t = 0$. We then compute
\begin{eqnarray*}
 \gamma_{u+v}(1) &\stackrel{\eqref{formula integral}}{=}& \int^1_0 u(t) +v(t) \, \de t + \frac{1}{2} \int^1_0 \left[\int^t_0 u(s) + v(s) \, \de s, u(t)+v(t) \right] \, \de t. \\
 &\stackrel{\eqref{formula integral},\eqref{parts}}{=}& \gamma_u(1) + \gamma_v(1) + \int^1_0 \left[\int^t_0 u(s) \, \de s , v(t)\right] \de t \\
 &=& \gamma_u(1) + \gamma_v(1),
\end{eqnarray*}
 where in the last equality we used the hypothesis $\int^1_0 \left[\int^t_0 u(s) \, \de s , v(t)\right] \de t = 0$.
\end{proof}

\begin{lemma} \label{equivalence_norms}
 Let $\g$ be a $2$-step nilpotent Lie algebra of the form $\g=V \oplus [\g,\g]$ for some $V \subseteq \g$. Let $\rho$ be an inner product on $V$ and $\lvert \cdot \rvert$ be a norm on $[\g,\g]$. Set $m \coloneq \dim([\g,\g])$. There exists $K > 0$ such that, for every $\zeta \in [\g,\g]$, there exist $x_1,\dots,x_{m} \in V$, $y_1,\dots,y_{m} \in V$, and $\alpha_1,\dots,\alpha_{m} \ge 0$ such that
 \begin{enumerate}[(i)]
 \item $\rho(x_k,x_k)=\rho(y_k,y_k)=1$ for every $1 \le k \le m$,
 \item $\rho(x_k,y_k)=0$ for every $1 \le k \le m$,
 \item $\zeta = \sum_{k=1}^{m} \alpha_k[x_k,y_k]$,
 \item $\sum_{k=1}^{m} \alpha_k \le K \lvert \zeta \rvert$.
 \end{enumerate}
\end{lemma}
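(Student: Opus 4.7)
The plan is to fix once and for all a basis of $[V,V]$ consisting of brackets of orthonormal pairs, and then obtain the lemma by expanding $\zeta$ in this basis and adjusting the orientation of each pair according to the sign of its coordinate.

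First I would show that the set $S \coloneq \{[e,f] : e,f \in V, \rho(e,e)=\rho(f,f)=1, \rho(e,f)=0\}$ spans $[V,V]$ as a real vector space. For any $u,v \in V$ with $u \neq 0$, let $v_\perp \coloneq v - \frac{\rho(u,v)}{\rho(u,u)}u$, so that $v_\perp$ is orthogonal to $u$. Antisymmetry gives $[u,v]=[u,v_\perp]$, and if $v_\perp \neq 0$ we can rewrite
$$[u,v] = \sqrt{\rho(u,u)\,\rho(v_\perp,v_\perp)}\,[u/\!\sqrt{\rho(u,u)},\,v_\perp/\!\sqrt{\rho(v_\perp,v_\perp)}] \in \R_{\ge 0}\cdot S,$$
while otherwise $[u,v]=0$. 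Since $[V,V]$ is, by definition, spanned by brackets of elements of $V$, it follows that $S$ spans $[V,V]$, and from $S$ we can extract a basis $\{[e_k,f_k]\}_{k=1}^{m_2}$ of $[V,V]$; this choice is independent of $\zeta$.

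Given $\zeta \in [V,V]$, I would then expand uniquely $\zeta = \sum_{k=1}^{m_2} c_k [e_k,f_k]$ with $c_k \in \R$. The map $\zeta \mapsto (c_1,\dots,c_{m_2})$ is a linear isomorphism between the finite-dimensional normed spaces $([V,V],\lvert\cdot\rvert)$ and $(\R^{m_2},\lVert\cdot\rVert_1)$, so by equivalence of norms there exists $K>0$, independent of $\zeta$, such that $\sum_{k=1}^{m_2} \lvert c_k\rvert \le K\,\lvert\zeta\rvert$. This $K$ is the constant in the lemma. The dependence on $\zeta$ enters only through signs: set $(v_k,w_k)\coloneq (e_k,f_k)$ if $c_k\ge 0$ and $(v_k,w_k)\coloneq (f_k,e_k)$ otherwise, and put $\alpha_k\coloneq \lvert c_k\rvert\ge 0$. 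Antisymmetry of the bracket gives $\alpha_k[v_k,w_k]=c_k[e_k,f_k]$, so (iii) holds; swapping an ordered pair preserves orthonormality, giving (i) and (ii); and (iv) is the norm-equivalence bound.

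The only point that requires care is ensuring that the number of terms in the decomposition is exactly $m_2$ rather than some larger number. A more naive route would be to lift $\zeta$ to a $2$-vector $\omega \in \bigwedge^2 V$ under the bracket map and apply the Pfaffian (spectral) decomposition of $\omega$; this however produces up to $\lfloor \dim V/2\rfloor$ terms, which in general exceeds $m_2$, and a subsequent Carathéodory reduction need not preserve the $\ell^1$-norm of the coefficients. Starting instead from a fixed basis of $[V,V]$ built out of orthonormal brackets bypasses this obstacle, since $\zeta$ already lives in an $m_2$-dimensional space and its coordinates in such a basis are controlled linearly by $\lvert\zeta\rvert$.
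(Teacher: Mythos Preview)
Your proof is correct and follows essentially the same route as the paper: fix once and for all a basis of $[V,V]$ consisting of brackets of $\rho$-orthonormal pairs (the paper obtains these by Gram--Schmidt, just as you do), invoke equivalence of the $\ell^1$-coordinate norm with $|\cdot|$ on the finite-dimensional space $[V,V]$ to get $K$, and swap $v_k\leftrightarrow w_k$ to absorb signs into $\alpha_k\ge 0$. Your additional paragraph on the Pfaffian/Carath\'eodory alternative is commentary rather than a different argument.
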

\begin{proof}
 Let $\left([x_k,y_k]\right)_{k=1}^{m}$, with $x_k,y_k \in V$ for every $1 \le k \le m$, be a basis of $[\g,\g]$ made of simple Lie brackets, which exists because $[\g,\g]=[V,V]$. Up to applying a Gram–Schmidt process and a renormalization, we can assume that $\rho(x_k,y_k)=0$ and $\rho(x_k,x_k)=\rho(y_k,y_k)=1$, for every $1 \le k \le m$. The $\ell_1$ norm $\lVert \cdot \rVert$ with respect to $\left([x_k,y_k]\right)_{k=1}^{m}$ and $\lvert \cdot \rvert$ are equivalent norms on the finite-dimensional vector space $[\g,\g]$. Let $K > 0$ be such that $\lVert \cdot \rVert \le K \lvert \cdot \rvert$. Since $\left([x_k,y_k]\right)_{k=1}^{m}$ is a basis of $[\g,\g]$, every $\zeta \in [\g,\g]$ is uniquely written as $\zeta = \sum_{k=1}^{m} \alpha_k[x_k,y_k]$. Up to switching $x_k$ with $y_k$, we can assume that $\alpha_k \ge 0$ for every $1 \le k \le m$. With this assumption $\sum_{k=1}^{m} \alpha_k = \lVert \zeta \rVert$. Therefore, also \emph{(iv)} is verified.
\end{proof}

\subsection{Energy of curves} In our discussion, it is convenient to consider energy-minimizing curves in sub-Riemannian Lie groups. We summarize some standard terminology and basic facts, and we refer to \cite[Section 7.3]{donne2024metricliegroupscarnotcaratheodory} for details.

\begin{definition} \label{energy}
Let $(\Delta,\rho)$ be a sub-Riemannian structure on a Lie group $G$. Let $\gamma \colon [0,1] \to G$ be a $\Delta$-horizontal curve. We say that $\gamma$ is \emph{energy-minimizing} if 
	\begin{equation} \label{def_minimizing}
	\int^1_0 \rho(\dot{\gamma}(t),\dot{\gamma}(t)) \, \de t= d(\gamma(0),\gamma(1))^2.
 \end{equation} 
 
\end{definition}

A $\Delta$-horizontal curve is energy-minimizing if and only if it is length-minimizing and it is parameterized with constant speed; see \cite[Proposition~3.1.11]{donne2024metricliegroupscarnotcaratheodory}. 
Moreover, we get the following formula for the square of the metric in every sub-Riemannian Lie group $(G,d)$:
\begin{equation} \label{distance_square}
 d(e,p)^2 = \inf \Set{ \int^1_0 \rho_\id(u(t),u(t)) \, \de t \, | \, \text{$u \in L^2([0,1],\Delta_\id)$ such that $\gamma_u(1)=p$}}.
\end{equation}

\subsection{Complexification of real Lie algebras} The key step in the proof of the improved Ball-Box Theorem (\cref{powerful_lemma}) will be to perturb a $\Delta$-horizontal curve in order to vary its end-point by a prescribed vertical direction. In our argument, it is convenient to write these perturbations as Fourier series in the complexified Lie algebra, which we briefly recall. We refer to \cite[Section~5.1.4]{hilgert2011structure} for details.

If $\g$ is a real Lie algebra, we define its \emph{complexification} $\g_\C$ as the complex Lie algebra $$\g_\C \coloneq \Set{ x + iy \, | \, x,y \in \g},$$ where the Lie bracket is defined by setting
\begin{equation*} 
 [x_1 + iy_1,x_2+iy_2] \coloneq [x_1,x_2]-[y_1,y_2] + i\left([x_1,y_2]+[x_2,y_1]\right),
\end{equation*}
for every $x_1,x_2,y_1,y_2 \in \g$. If $z = x + iy \in \g_\C$ for some $x,y \in \g$, we define
\begin{equation*}
 \bar{z} \coloneq x-iy, \qquad \re(z) \coloneq x, \qquad \im(z)\coloneq y.
\end{equation*}
An inner product $\rho$, defined on a subspace $\Delta \subseteq \g$, extends to an Hermitian product on $\Delta_\C$ by setting
$$ \rho(x_1+iy_1,x_2+iy_2) \coloneq \rho(x_1,x_2) + \rho(y_1,y_2) + i\rho(y_1,x_2)- i\rho(x_1,y_2), \quad \text{for every $x_1,x_2,y_1,y_2 \in \Delta$.}$$

\begin{remark}
 We point out that the complexification $\g_\C$ of a real Lie algebra $\g$ is also a real Lie algebra. Moreover, $\g$ is a Lie subalgebra of $\g_\C$. If $\g$ is 2-step nilpotent, then also $\g_\C$ is 2-step nilpotent. If a Lie group $G$ is as in \eqref{def G}, then we define 
 \begin{equation} \label{def Gc}
 G_\C:= (\g_\C,*), \quad \text{where} \quad x*y \coloneq x+y+\tfrac{1}{2}[x,y], \qquad \text{ for all } x,y \in \g_\C.
\end{equation}
Clearly, $G$ is a Lie subgroup of $G_\C$.
\end{remark}

Hereafter, for every $n \in \Z$, we denote by $f_n$ the $n$-th element of the standard Hilbert basis of $L^2([0,1],\C)$:
\begin{gather} 
 f_n \colon [0,1] \to \C, \quad t \mapsto f_n(t):= e^{2\pi i n t}, \label{def_f}\\
 \text { for which }\int^1_0 f_n(t)f_m(t) \, \de t = \delta_{n,-m}, \quad \text{for every $n,m \in \Z$.} \label{def_f_prop}
\end{gather}

\begin{lemma} \label{lemma lemma}
 Let $\g$ be a 2-step nilpotent Lie algebra, and fix an inner product $\rho$ on a subspace $\Delta \subseteq \g$. Let $G_\C$ be as in \eqref{def Gc}, and consider $v \in L^2([0,1],\Delta_\C)$ such that
 \begin{equation} \label{def_xi_lemma}
 v(t) = \sum_{n \in E} c_nf_n(t), 
 \end{equation}
 for some finite subset of non-zero integers $E$, and some $\{c_n\}_{n\in E} \subseteq \Delta_\C$. Then
 \begin{equation} \label{complex_endpoint_1}
 \gamma_v(1) = \sum_{n \in E} \frac{1}{4 \pi i n} [c_n,c_{-n}] \quad \text{and} \quad \int^1_0 \rho(v(t),v(t)) \, \de t
 = \sum_{n \in E} \rho(c_n,c_n).
\end{equation}
\end{lemma}

\begin{proof}
 First, for every $t \in [0,1]$, we compute
 \begin{eqnarray} \label{int_xi_t}
 \int^t_0 v(s) \, \de s &\stackrel{\eqref{def_xi_lemma}}{=}& \int^t_0 \sum_{n \in E} c_nf_n(t) \, \de t = \sum_{n \in E} \frac{c_n}{2 \pi i n}(f_n(t)-1).
 \end{eqnarray}
 where in the second equality we used that $0 \notin E$. In particular, we obtain $\int^1_0 v(t) \, \de t = 0$. Thus, we compute
 \begin{eqnarray*}
 \gamma_v(1) &\stackrel{\eqref{formula integral}}{=}& \int^1_0 v(t) \, \de t + 
 \frac{1}{2} \int^1_0 \left[\int^t_0 v(s) \, \de s, v(t) \right] \, \de t \\ &\stackrel{\eqref{def_xi_lemma}\eqref{int_xi_t}}{=}& \frac{1}{2} \int^1_0 \left[ \sum_{n \in E} \frac{c_n}{2 \pi i n}(f_n(t)-1), \sum_{m \in E} c_mf_m(t)\right] \, \de t \\
 &=& \sum_{m,n \in E} \frac{1}{4\pi i n} [c_n,c_m] \int^1_0 (f_n(t)-f_0(t))f_m(t) \, \de t \\
 &\stackrel{\eqref{def_f_prop}}{=}& \sum_{n,m \in E} \frac{1}{4\pi in} [c_n,c_{m}]\delta_{n,-m} = \sum_{n \in E} \frac{1}{4\pi in} [c_n,c_{-n}]. 
 \end{eqnarray*} 
 We then verified the first identity in \eqref{complex_endpoint}. For the second one, we compute 
 \begin{eqnarray*}
 \int^1_0 \rho(v(t),v(t)) \, \de t &\stackrel{\eqref{def_xi_lemma}}{=}& \int^1_0 \rho\left(\sum_{n \in E} c_nf_n(t),\sum_{m \in E} c_mf_m(t)\right) \, \de t \\
 &=& \sum_{m,n \in E} \rho(c_n,c_m)\int^1_0 f_n(t)\overline{f_m}(t) \, \de t \\
 &\stackrel{\eqref{def_f_prop}}{=}& \sum_{m,n \in E} \rho(c_n,c_m) \delta_{n,m} \, \de t = \sum_{n \in E} \rho(c_n,c_n),
 \end{eqnarray*}
 where we used that $\rho$ is Hermitian and that $\overline{f_m}(t)=f_{-m}(t)$, for every $m \in \Z$ and $t \in [0,1]$.
 \end{proof}

We conclude this section with a consequence of \cref{lemma lemma}.

\begin{corollary} \label{corollary_ortogonal}
 Let $G$ be as in \eqref{def G}, and fix an inner product $\rho$ on a subspace $\Delta \subseteq \g$. Consider $v \in L^2([0,1],\Delta)$ such that
 \begin{equation*} 
 v(t) = \sum_{n \in E} \re(c_nf_n(t)), 
 \end{equation*}
 for some finite subset of positive integers $E$, and some $\{c_n\}_{n\in E} \subseteq (\Delta_\id)_\C$. Then
\begin{equation} \label{complex_endpoint}
 \gamma_v(1) = \sum_{n \in E} \frac{1}{4 \pi n} [\im(c_n),\re(c_n)] \quad \text{and} \quad \int^1_0 \rho(v(t),v(t)) \, \de t = \frac{1}{2} \sum_{n \in E} \rho(c_n,c_n).
\end{equation}
\end{corollary}

\begin{proof}
 In the assumptions of the statement, we get
 \begin{equation} \label{vre}
 v(t)=\sum_{n \in E} \re(c_nf_n(t))=\sum_{n \in E} \frac{c_n}{2}f_n(t) + \sum_{n \in E} \frac{\overline{c_n}}{2}f_{-n}(t).
 \end{equation}
 Therefore, applying \cref{lemma lemma}, we compute
 \begin{eqnarray*}
 \gamma_v(1) &\stackrel{\eqref{complex_endpoint_1}\eqref{vre}}{=}& \sum_{n \in E} \frac{1}{4 \pi i n}\left[\frac{c_n}{2}, \frac{\overline{c_n}}{2}\right] - \sum_{n \in E} \frac{1}{4 \pi i n}\left[\frac{\overline{c_n}}{2},\frac{c_n}{2} \right]\\
 &=& \sum_{n \in E} \frac{1}{8 \pi i n}\left[c_n,\overline{c_n} \right] \\
 &=&\sum_{n \in E} \frac{1}{8 \pi i n} \left(\left[\re(c_n),-i \im(c_n)\right]+ \left[i \im(c_n), \re(c_n)\right]\right) \\
 &=& \sum_{n \in E} \frac{1}{4 \pi n} \left[\im(c_n), \re(c_n)\right].
 \end{eqnarray*}
 We then verified the first identity in \eqref{complex_endpoint}. For the second one, we also apply \cref{lemma lemma} to compute
 \begin{eqnarray*}
 \int^1_0 \rho_\id(v(t),v(t)) \, \de t &\stackrel{\eqref{complex_endpoint_1}\eqref{vre}}{=}& \sum_{n \in E } \frac{1}{4} \rho(c_n,c_n) + \sum_{n \in E } \frac{1}{4} \rho(\overline{c_n},\overline{c_n}) = \frac{1}{2}\sum_{n \in E } \rho(c_n,c_n),
 \end{eqnarray*}
 where we used that $\rho$ is Hermitian to obtain $\rho(\overline{c_n},\overline{c_n})=\rho(c_n,c_n)$.
\end{proof}


 \section{Main results} \label{main}

\subsection{Vertical variations} Our proofs of \cref{main theorem} and \cref{equivalence quasi_isometries intro} both rely on the following fundamental result.

\begin{lemma}[Perturbation technique] \label{new_lemma}
 Let $G$ be a $2$-step nilpotent group as in \eqref{def G} and let $(\Delta,\rho)$ be a sub-Riemannian structure on $G$, with $V$ as in \eqref{def V}. Fix a norm $\lvert \cdot \rvert$ on $[\g,\g]$. Then, there exists $C>0$ such that for every $u \in L^2([0,1],\Delta_\id)$ and every $\zeta \in [\g,\g]$, there exists $v \in L^2([0,1],V)$ such that
 \begin{enumerate}[(i)]
 \item $\gamma_{u+v}(1)=\gamma_u(1)+\zeta$,
	\item $\int^1_0 \rho_\id(v(t),u(t))\, \de t =0$,
	\item $\int^1_0 \rho_\id(v(t),v(t))\, \de t \le C \lvert \zeta \rvert$.
\end{enumerate}
\end{lemma}

\begin{proof}
We set $m \coloneq \dim([V,V])$, and $K > 0$ to be the constant given in \cref{equivalence_norms} for the norm $\lvert \cdot \rvert$ on $[\g,\g]$ and the restriction to $V$ of the inner product $\rho_\id$. Fix $\zeta \in [\g,\g]$. Thanks to \cref{equivalence_norms}, there exist $x_1,\dots,x_{m} \in V$, $y_1,\dots,y_{m} \in V$, and $\alpha_1,\dots,\alpha_{m} \ge 0$, satisfying $\rho_\id(x_k,x_k)=\rho_\id(y_k,y_k)=1$ and $\rho_\id(x_k,y_k)=0$ for every $1 \le k \le m$, such that
\begin{equation} \label{zeta_brakets}
 \zeta = \sum_{k=1}^{m} \alpha_k[x_k,y_k] \quad \text{and} \quad \sum_{k=1}^{m} \alpha_k \le K \lvert \zeta \rvert.
\end{equation}

	Next, we set $E_k \coloneq [(m+2)(k-1)+1, (m+2)k] \cap \Z$, for every $1 \le k \le m$,	and we observe that $\{E_k\}_{k=1}^{m}$ is a family of pairwise disjoint subsets of positive integers. We also set $N \coloneq m^2 + 2m$, so that we get
	\begin{equation} \label{bound_N}
		n \le N, \quad \text{for every $1 \le k \le m$ and $n \in E_k$.}
	\end{equation}
We fix $u \in L^2([0,1],\Delta_\id)$. For every $n \in \Z$ and $\xi \in V_\C$, we define
	\begin{equation*}
		P^n_\xi \coloneq \int^1_0 f_n(t) \rho_\id(\xi,u(t)) \, \de t \in \mathbb{C}, \quad Q^n_\xi \coloneq \int^1_0 f_n(t) \left[ \int^t_0 u(s) \, \de s , \xi \right] \de t \in [V,V]_\mathbb{C},
	\end{equation*}
	with $f_n$ defined as in \eqref{def_f}. For every $1 \le k \le m$, we consider the tuple $\{z_{n,k}\}_{n \in E_k} \subseteq \mathbb{C}$ to be a solution of the following system of equations:
	\begin{equation} \label{system_c}
		\begin{cases}
			\sum_{n \in E_k} z_{n,k}P^n_{y_k + ix_k} = 0, \\
			\sum_{n \in E_k} z_{n,k}Q^n_{y_k + ix_k} = 0, \\
			\sum_{n \in E_k} \tfrac{1}{4 \pi n} \lvert z_{n,k} \rvert^2 = \alpha_k.
		\end{cases}
	\end{equation}
	We claim that such a solution exists. Indeed, a non-trivial solution to the linear part of the system exists since there are $m + 1$ equations and $\lvert E_k \rvert = m + 2$ variables. Such a solution can be normalized so that the last equation is also satisfied. We recall that $\alpha_k \ge 0$.

 Next, for every $1 \le k \le m$, we define $v_k \colon [0,1] \to V$ such that
 \begin{equation} \label{def_vk}
 v_k(t) \coloneq \sum_{n \in E_k} \re\left(z_{n,k}(y_k+ix_k)f_n(t)\right), \quad \text{for every $t \in [0,1]$.} 
 \end{equation}
 In what follows, we show that $v \coloneq \sum_{k=1}^{m} v_k \in L^2([0,1],V)$ satisfies the conditions of the statement.

For proving \emph{(i)}, we first claim that $\gamma_v(1) = \zeta$. Indeed, we observe that the sets $E_k$ are pairwise disjoint, and we compute:
 \begin{eqnarray}
 \gamma_v(1)&\stackrel{\eqref{complex_endpoint}}{=}& \sum_{k=1}^m \sum_{n \in E_k} \frac{1}{4 \pi n} [\im(z_{n,k}(y_k+ix_k)),\re(z_{n,k}(y_k+ix_k))] \notag \\
 &=& \sum_{k=1}^m \sum_{n \in E_k} \frac{1}{4 \pi n} \lvert z_{n,k}\rvert^2 [x_k,y_k] \stackrel{\eqref{system_c}}{=} \sum_{k=1}^m \alpha_k[x_k,y_k] \stackrel{\eqref{zeta_brakets}}{=} \zeta. \label{end_v}
 \end{eqnarray}

Next, we claim that $\gamma_{u+v}(1)=\gamma_u(1)+\gamma_v(1)$. Indeed, we observe that $\int^1_0 v(t) \, \de t = 0$ and we compute
\begin{eqnarray*}
 \int^1_0 \left[\int^t_0 u(s) \, \de s , v(t)\right] \de t &=& \int^1_0 \left[\int^t_0 u(s) \, \de s , \sum_{k=1}^mv_k(t)\right] \de t \\
 &=& \sum_{k=1}^m \sum_{n \in E_k} \int^1_0 \left[\int^t_0 u(s) \, \de s , \re\left(z_{n,k}(y_k+ix_k)f_n(t)\right)\right] \de t \\
 &=& \sum_{k=1}^m \re\left(\sum_{n \in E_k}z_{n,k}\int^1_0 f_n(t) \left[\int^t_0 u(s) \, \de s , y_k+ix_k \right] \de t\right) \\
 &=& \sum_{k=1}^m \re\left(\sum_{n \in E_k} z_{n,k} Q^n_{y_k + ix_k}\right) \stackrel{\eqref{system_c}}{=} 0.
\end{eqnarray*}
Therefore, the hypothesis of \cref{lemma_wedge} are satisfied and we get that 
$$\gamma_{u+v}(1)=\gamma_u(1)+\gamma_v(1)\stackrel{\eqref{end_v}}{=}\gamma_u(1)+\zeta.$$ In particular, \emph{(i)} is verified.

Next, we check \emph{(ii)}, i.e., $\int^1_0 \rho_\id(v(t),u(t))\, \de t =0$. We compute
\begin{eqnarray*}
 \int^1_0 \rho_\id(v(t),u(t))\, \de t &=& \sum_{k=1}^m \left(\int^1_0 \rho_\id(v_k(t),u(t))\, \de t \right) \\ &=& \sum_{k=1}^m \sum_{n \in E_k} \left(\int^1_0 \rho_\id(\re\left(z_{n,k}(y_k+ix_k)f_n(t)\right),u(t))\, \de t \right) \\ &=& \sum_{k=1}^m \re\left(\sum_{n \in E_k} z_{n,k} \int^1_0 f_n(t)\rho_\id(y_k+ix_k,u(t))\, \de t \right) \\ &=& \sum_{k=1}^m \re\left(\sum_{n \in E_k} z_{n,k} P^n_{w_k + iv_k}\right) \stackrel{\eqref{system_c}}{=} 0,
\end{eqnarray*}
where in the third equality we used the fact that $u(t) \in V$ in order to move the operator $\re$ outside the Hermitian product. Thus, \emph{(ii)} is verified.

Finally, to prove \emph{(iii)}, we estimate
\begin{eqnarray*}
 \int^1_0 \rho_\id(v(t),v(t))\, \de t &\stackrel{\eqref{complex_endpoint}}{=}& \frac{1}{2} \sum_{k=1}^m \sum_{n \in E_k} \lvert z_{n,k} \rvert^2 \rho_\id(y_k+ix_k,y_k+ix_k) \\
 &=& \sum_{k=1}^m \sum_{n \in E_k} \lvert z_{n,k} \rvert^2 \le 4 \pi N \sum_{k=1}^m \sum_{n \in E_k} \frac{1}{4\pi n}\lvert z_{n,k} \rvert^2 \\
 &\stackrel{\eqref{system_c}}{=}& 4 \pi N \sum_{k=1}^m \alpha_k \stackrel{\eqref{zeta_brakets}}{\le} 4\pi KN \lvert \zeta \rvert,
\end{eqnarray*}
where in the second equality we used the fact that $\rho_\id(x_k,x_k)=\rho_\id(y_k,y_k)=1$ and $\rho_\id(x_k,y_k)=0$, thus $\rho_\id(y_k+ix_k,y_k+ix_k)=2$. The last statement \emph{(iii)} is then proved by setting $C \coloneq 4\pi KN$. \qedhere
\end{proof}

We include two consequences of \cref{new_lemma}.

\begin{corollary}[Improved Ball-Box Theorem] \label{powerful_lemma}
 Let $(G,d)$ be a simply connected, sub-Riemannian, $2$-step nilpotent Lie group and denote by $\g$ its associated Lie algebra. Fix a norm $\lvert \cdot \rvert$ on $[\g,\g]$. Then, there exists $C>0$ such that
 \begin{equation} \label{pitagora_ballbox}
 d(e,q \cdot \exp(\zeta))^2 \le d(e,q)^2 + C \lvert \zeta \rvert, \quad \text{for every $q \in G$ and $\zeta \in [\g,\g]$.}
 \end{equation}
\end{corollary}
We stress that \eqref{pitagora_ballbox} is an improvement of the triangular inequality combined with the Ball-Box Theorem (see \cite[Theorem~4.3.1]{donne2024metricliegroupscarnotcaratheodory}), which implies the following weaker estimate:
\begin{equation*}
 d(e,q \cdot \exp(\zeta)) \le d(e,q) + C \sqrt{\lvert \zeta \rvert}, \quad \text{for every $q \in G$ and $\zeta \in [\g,\g]$.}
\end{equation*}

\begin{proof}[Proof of \cref{powerful_lemma}] Thanks to \cref{model}, we can assume $G$ to be as in \eqref{def G} and the sub-Riemannian metric $d$ to be associated with a sub-Riemannian structure $(\Delta,\rho)$ on $G$. We define $V$ as in \eqref{def V}, and we set $C_1$ to be the constant given in \cref{new_lemma} for the norm $\lvert \cdot \rvert$ on $[\g,\g]$.

Fix $q \in G$ and $\zeta \in [\g,\g]$. Consider an energy-minimizing curve $\gamma \colon [0,1] \to G$ such that $\gamma(0)=\id$ and $\gamma(1)=q$. Let $u \in L^2([0,1],\Delta_\id)$ denote the control of $\gamma$. Thanks to \cref{new_lemma}, there exists $v \in L^2([0,1],V)$ such that 
\begin{equation*}
 \gamma_{u+v}(1) = \gamma_u(1) + \zeta \stackrel{\eqref{def G}}{=} q * \exp(\zeta),
 \end{equation*}
 together with
 \begin{equation*}
 \int^1_0 \rho_\id(v(t),u(t))\, \de t =0 \quad \text{and} \quad \int^1_0 \rho_\id(v(t),v(t))\, \de t = C_1 \lvert \zeta \rvert.
 \end{equation*}
 Therefore, we estimate
 \begin{eqnarray*}
 d(e,q \cdot \exp(\zeta))^2 &\stackrel{\eqref{distance_square}}{\le}& \int^1_0 \rho_\id(u(t) + v(t),u(t) + v(t))\, \de t \\
 &=& \int^1_0 \rho_\id(u(t),u(t))\, \de t + \int^1_0 \rho_\id(v(t),v(t))\, \de t \\
 &=& d(e,q)^2 + C_1 \lvert \zeta \rvert,
 \end{eqnarray*}
 where in the last inequality we used the fact that $u$ is an energy-minimizing control. The statement is then proved by setting $C\coloneq C_1$.
\end{proof}

Thanks to \cref{powerful_lemma}, we get the following improvement of the classical description of geodesics in $2$-step nilpotent Lie groups \cite[Proposition~7.3.10]{donne2024metricliegroupscarnotcaratheodory}, obtaining in addition a uniform bound on $\zeta$:

\begin{corollary} \label{bounded drift}
 Let $G$ be a $2$-step nilpotent group as in \eqref{def G} and let $(\Delta,\rho)$ be a sub-Riemannian structure on $G$, with $V$ as in \eqref{def V}. Then, there exists $C > 0$ such that, for every energy-minimizing curve $\gamma \colon [0,1] \to G$ with $\gamma(0) = \id$, there are $M \in \mathfrak{so}(V)$, $b \in \ker(M)$, $c \in M(V)$, and $\zeta \in [V,V] \cap \Delta$ such that $\sqrt{\rho_\id(\zeta,\zeta)}\le C$ and
	\begin{equation} \label{equation_geodesic}
 \gamma(t)= x(t) + \frac{1}{2}\int^t_0 [x(s),\dot{x}(s)] \, \de s + t\zeta, \quad \text{for every $t \in [0,1]$.}
	\end{equation}
 where $x(t) \coloneq c - e^{tM}c + tb \in V$, for every $t \in [0,1]$.
\end{corollary}

\begin{proof}
 Fix on $[V,V]$ a norm $\lvert \cdot \rvert$ that extends the restriction of $\sqrt{\rho(\cdot,\cdot)}$ on $\Delta \cap [V,V]$. Let $C_1>0$ be the constant from \cref{powerful_lemma} so that 
 \begin{equation} \label{pitagora_ballbox2}
 d(e,q \cdot \exp(\zeta))^2 \le d(e,q)^2 + C_1 \lvert \zeta \rvert, \quad \text{for every $q \in G$ and $\zeta \in [\g,\g]$.}
 \end{equation}
Fix $q \in G$ and consider an energy-minimizing curve $\gamma \colon [0,1] \to G$ such that $\gamma(0)=\id$ and $\gamma(1)=q$. Let $u \in L^2([0,1],\Delta_\id)$ denote the control of $\gamma$.
From \cite[Proposition~7.3.10]{donne2024metricliegroupscarnotcaratheodory}, there are $M \in \mathfrak{so}(V)$, $b \in \ker(M)$, $c \in M(V)$, and $\zeta \in [V,V] \cap \Delta$ such that 
\begin{equation*}
 \gamma(t)= x(t) + \frac{1}{2}\int^t_0 [x(s),\dot{x}(s)] \, \de s + t\zeta, \quad \text{for every $t \in [0,1]$.}
	\end{equation*}
 where $x(t) \coloneq c - e^{tM}c + tb \in V$, for every $t \in [0,1]$.
We define $u_\infty \in L^2([0,1],V)$ such that, for every $t \in [0,1]$, the value $u_\infty(t)$ is the projection of $u(t)$ on $V$ along $[V,V]$ . We observe that, for every $t \in [0,1]$, we have $\gamma(t)=\gamma_{u_\infty}(t)+t\zeta$. In particular
\begin{equation} \label{point_q_2}
 q = \gamma_{u_\infty}(1) + \zeta = \gamma_{u_\infty}(1) \cdot \exp(\zeta).
\end{equation}
Moreover, since $V$ and $[V,V] \cap \Delta$ are $\rho_\id$-orthogonal, we also have
\begin{equation} \label{energy_gamma_2}
 \int^1_0 \rho_\id(u(t),u(t))\, \de t =\int^1_0 \rho_\id(u_\infty(t),u_\infty(t))\, \de t + \rho_\id(\zeta,\zeta).
\end{equation}
We then estimate
\begin{eqnarray*}
 \rho(\zeta,\zeta) &\stackrel{\eqref{energy_gamma_2}}{=}& \int^1_0 \rho_\id(u(t),u(t))\, \de t - \int^1_0 \rho_\id(u_\infty(t),u_\infty(t))\, \de t \\
 &=& d(e,q)^2 - \int^1_0 \rho_\id(u_\infty(t),u_\infty(t))\, \de t \\
 &\stackrel{\eqref{point_q_2}}{=}& d(e,\gamma_{u_\infty}(1) \cdot \exp(\zeta))^2 - \int^1_0 \rho_\id(u_\infty(t),u_\infty(t))\, \de t \\
 &\stackrel{\eqref{pitagora_ballbox2}}{\le}& d(e,\gamma_{u_\infty}(1))^2 + C_1\lvert \zeta \rvert - \int^1_0 \rho_\id(u_\infty(t),u_\infty(t))\, \de t \\
 &\stackrel{\eqref{distance_square}}{\le}& \int^1_0 \rho_\id(u_\infty(t),u_\infty(t))\, \de t + C_1\lvert \zeta \rvert - \int^1_0 \rho_\id(u_\infty(t),u_\infty(t))\, \de t
 = C_1\lvert \zeta \rvert,
\end{eqnarray*}
where in the second equality we use the fact that $u$ is an energy-minimizing control. Since $\lvert \cdot \rvert$ coincides with $\sqrt{\rho(\cdot,\cdot)}$ on $\Delta \cap [V,V]$, we get $\rho(\zeta,\zeta) \le C_1 \sqrt{\rho(\zeta,\zeta)}$. We conclude that $\sqrt{\rho(\zeta,\zeta)} \le C_1$. The statement is then proved by setting $C \coloneq C_1$.
\end{proof}

\subsection{Proof of theorems} We are finally ready to prove \cref{main theorem}.

\begin{proof}[Proof of \cref{main theorem}]
 Thanks to \cref{model}, we can assume $G$ to be as in \eqref{def G} and the sub-Riemannian metric $d$ to be associated with a sub-Riemannian structure $(\Delta,\rho)$ on $G$ with $V$ as in \eqref{def V}. Let $d_\infty$ be the canonical asymptotic metric of $d$. Thanks to \cref{powerful_lemma}, there exists $C_1>0$ such that 
 \begin{equation} \label{pitagora_ballbox3}
 d_\infty(e,q \cdot \exp(\zeta))^2 \le d_\infty(e,q)^2 + C_1 \lvert \zeta \rvert, \quad \text{for every $q \in G$ and $\zeta \in [\g,\g]$.}
 \end{equation}

 Since the metric $d$ is left-invariant, we assume $p$ in \eqref{main equation} to be the identity element $e$. Fix $q \in G$ and consider an energy-minimizing curve $\gamma \colon [0,1] \to G$ such that $\gamma(0)=\id$ and $\gamma(1)=q$. Let $u \in L^2([0,1],\Delta_\id)$ denote the control of $\gamma$. We define $u_\infty \in L^2([0,1],V)$ such that $u_\infty(t)$ is the projection of $u(t)$ on $V$ parallel to $[V,V]$, for every $t \in [0,1]$. From \cref{bounded drift}, we observe that, for every $t \in [0,1]$, we have $\gamma(t)=\gamma_{u_\infty}(t)+t\zeta$, for some $\zeta \in [\g,\g]$. In particular
\begin{equation} \label{point_q_3}
 q = \gamma_{u_\infty}(1) + \zeta = \gamma_{u_\infty}(1) \cdot \exp(\zeta).
\end{equation}
From \cref{bounded drift}, there exists $C_2$, independent of $\zeta$, such that
\begin{equation} \label{bound zeta}
 \sqrt{\rho(\zeta,\zeta)} \le C_2.
\end{equation}
Since $\gamma_{u_\infty}$ is a $\Delta_\infty$-horizontal curve, we have
\begin{equation} \label{infinity_eta}
 d_\infty(e,\gamma_{u_\infty}(1))^2 \le \int^1_0 \rho_\id(u_\infty(t),u_\infty(t))\, \de t \le \int^1_0 \rho_\id(u(t),u(t))\, \de t,
\end{equation}
where in the second inequality we used the fact that $V$ and $[V,V] \cap \Delta$ are $\rho_\id$-orthogonal. 

We then estimate
\begin{eqnarray*}
 d_\infty(e,q)^2 &\stackrel{\eqref{point_q_3}}{=}& d_\infty(e, \gamma_{u_\infty}(1) \cdot \exp(\zeta))^2 \\
 &\stackrel{\eqref{pitagora_ballbox3}}{\le}& d_\infty(e, \gamma_{u_\infty}(1))^2 + C_1 \sqrt{\rho(\zeta,\zeta)} \\
 &\stackrel{\eqref{infinity_eta}}{\le}& \int^1_0 \rho_\id(u(t),u(t))\, \de t + C_1\sqrt{\rho(\zeta,\zeta)} \\
 &\stackrel{\eqref{bound zeta}}{\le}& d(e,q)^2+ C_1C_2,
\end{eqnarray*}
where in the last inequality we use the fact that $u$ is an energy-minimizing control. Thus, we get a bound on $d_\infty(e,q)^2 - d(e,q)^2$. The bound on $d(e,q)^2 - d_\infty(e,q)^2$ immediately follows from \eqref{obvious bound}. The statement \eqref{main equation} is then proved by setting $C \coloneq C_1C_2$. 

From \eqref{main equation} we get
\begin{equation} \label{big o estimate}
 \left\lvert d_\infty(p,q) - d(p,q) \right\rvert \le \frac{C}{d_\infty(p,q) + d(p,q)} \quad \text{for every $p,q \in G, \, p \neq q$,}
 \end{equation} 
 which proves \eqref{first consequence}. 
\end{proof}

From \cref{main theorem}, we obtain the following consequence:

\begin{corollary} \label{bounded distance cor}
In the assumption of \cref{main theorem}, 
 the identity map $\mathrm{id} \colon (G,d) \to (G,d_\infty)$ is a rough isometry. 
	 \end{corollary}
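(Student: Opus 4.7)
The plan is to simply extract a uniform additive bound $\lvert d_\infty(p,q)-d(p,q)\rvert\le C'$ for all $p,q\in G$ from the already-established squared-distance bound \eqref{main equation}, and then note that the identity map is trivially surjective (hence roughly dense).

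First, recall from \eqref{obvious bound} that $d(p,q)\le d_\infty(p,q)$ for every $p,q\in G$, so the difference $d_\infty(p,q)-d(p,q)$ is nonnegative. It therefore suffices to bound it from above by a constant. By \eqref{main equation} there exists $C>0$ such that
\begin{equation*}
    d_\infty(p,q)^2\le d(p,q)^2+C, \quad \text{for every $p,q\in G$.}
\end{equation*}
The key elementary inequality is $\sqrt{b^2+C}\le b+\sqrt{C}$ for all $b\ge 0$, which follows from squaring both sides and noting $2b\sqrt{C}\ge 0$. Applying this with $b=d(p,q)$ yields
\begin{equation*}
    d_\infty(p,q)\le\sqrt{d(p,q)^2+C}\le d(p,q)+\sqrt{C},
\end{equation*}
and combining with $d\le d_\infty$ gives the uniform bound $0\le d_\infty(p,q)-d(p,q)\le\sqrt{C}$.

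To conclude that $\mathrm{id}\colon(G,d)\to(G,d_\infty)$ is a rough isometry, it only remains to observe that $\mathrm{id}(G)=G$, so its image is (trivially) $\sqrt{C}$-dense, and the defining inequality
\begin{equation*}
    \lvert d_\infty(\mathrm{id}(p),\mathrm{id}(q))-d(p,q)\rvert\le\sqrt{C}, \quad \text{for every $p,q\in G$,}
\end{equation*}
is exactly what we just proved. There is no substantive obstacle here: the entire work was done in \cref{main theorem}, and this corollary is a one-line computation once that squared-distance estimate is in hand. The only thing to notice is that the naive attempt to write $d_\infty-d=(d_\infty^2-d^2)/(d_\infty+d)$ and bound by $C/(d_\infty+d)$ would blow up as $p\to q$, so one must instead pass through the concavity-style estimate $\sqrt{b^2+C}\le b+\sqrt{C}$ to get a bound valid for all pairs, including nearby ones.
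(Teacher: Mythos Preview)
Your proof is correct and reaches the same constant $\sqrt{C}$ as the paper, but by a slightly different elementary route. The paper does not use the one-sided inequality $d\le d_\infty$; instead it combines the estimate $\lvert d_\infty-d\rvert\le C/(d_\infty+d)$ from \eqref{big o estimate} with the trivial bound $\lvert d_\infty-d\rvert\le d_\infty+d$, and then bounds the minimum of the two by their geometric mean $\sqrt{C}$. Your approach exploits the sign information and the subadditivity-type inequality $\sqrt{b^2+C}\le b+\sqrt{C}$, which is marginally more direct; the paper's trick has the minor advantage that it would work verbatim even without knowing which of $d$, $d_\infty$ is larger. Either way the corollary is, as you say, a one-line consequence of \cref{main theorem}.
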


\begin{proof}
 Clearly, we have 
 \begin{equation*}
 \left\lvert d_\infty(p,q) - d(p,q) \right\rvert \le d_\infty(p,q) + d(p,q) \quad \text{for every $p,q \in G$.}
 \end{equation*}
By combining the previous estimate with \eqref{main equation}, in the form of \eqref{big o estimate}, we obtain, for every $p \neq q$:
\begin{equation*} 
 \left\lvert d_\infty(p,q) - d(p,q) \right\rvert \le \min \Set{\frac{C}{d_\infty(p,q) + d(p,q)},d_\infty(p,q) + d(p,q)} \le \sqrt{C},
\end{equation*}
where in the last inequality we estimated the minimum with the geometric mean. The estimate for $p=q$ is clearly satisfied as well. We conclude that the identity map $(G,d) \to (G,d_\infty)$ is a rough isometry.
\end{proof} 


Before proving \cref{equivalence quasi_isometries intro}, we recall some terminology and basic facts about the abelianization of a metric Lie group. We refer to \cite[Section~10.1]{donne2024metricliegroupscarnotcaratheodory} for details. Hereafter, $\mathrm{Aut}(G)$ denotes the group of Lie group automorphisms of a Lie group $G$.

Let $G$ be a nilpotent Lie group with associated Lie algebra $\g$. The \emph{abelianization} $G_\mathrm{ab}$ of $G$ is the quotient Lie group 
\begin{equation*}
 G_\mathrm{ab} \coloneq G/[G,G].
\end{equation*}
We denote by $\pi_\mathrm{ab}$ the quotient map $\pi_\mathrm{ab} \colon G \to G_\mathrm{ab}$. If $\vphi \in \mathrm{Aut}(G)$, then there exists a unique $\vphi_\mathrm{ab} \in \mathrm{Aut}(G_\mathrm{ab})$ such that $\pi_\mathrm{ab} \circ \vphi = \vphi_\mathrm{ab} \circ \pi_\mathrm{ab}$. We refer to $\vphi_\mathrm{ab}$ as the \emph{abelianization map of $\vphi$.}

If $(G,d)$ is a nilpotent sub-Riemannian Lie group, then there exists a unique Euclidean norm $\lVert \cdot \rVert_\mathrm{ab}$ on $G_\mathrm{ab}$ such that the quotient map
 \begin{equation*}
 \pi_\mathrm{ab} \colon (G,d) \to (G_\mathrm{ab}, \lVert \cdot \rVert_\mathrm{ab})
 \end{equation*}
 is a submetry. We refer to $\lVert \cdot \rVert_\mathrm{ab}$ as the \emph{abelianization norm of $(G,d)$}. 
 
\begin{remark} \label{equality automorphism}
 If $G$ is a simply connected nilpotent Lie group with associated Lie algebra $\g = V \oplus [\g,\g]$ and $\vphi,\vphi' \in \mathrm{Aut}(G)$ are such that $\vphi_\mathrm{ab} = \vphi'_\mathrm{ab}$ and $\de_\id \vphi(V) = \de_\id \vphi'(V)$, then $\vphi = \vphi'$. This is a consequence of the fact that, in each nilpotent Lie algebra $\g$, every subspace $V \subseteq \g$ complementary to $[\g,\g]$ is bracket-generating; see \cite[Exercise~10.5.8]{donne2024metricliegroupscarnotcaratheodory}.
\end{remark}

\begin{lemma} \label{same projection}
 Let $G$ be a simply connected, $2$-step nilpotent Lie group, with associated Lie algebra $\g$, equipped with sub-Riemannian structures $(\Delta,\rho)$ and $(\Delta',\rho')$ such that
 \begin{equation} \label{min bracket generating}
 \Delta_e \oplus [\g,\g] = \g \quad \text{and} \quad \Delta'_e \oplus [\g,\g] = \g.
 \end{equation}
 Denote by $d$ and $d'$ the sub-Riemannian metrics associated with $(\Delta,\rho)$ and $(\Delta',\rho')$, respectively. Also, denote by $\lVert \cdot \rVert_\mathrm{ab}$ and $\lVert \cdot \rVert'_\mathrm{ab}$ the abelianization norms of $(G,d)$ and $(G,d')$, respectively.
 If $\lVert \cdot \rVert_\mathrm{ab}=\lVert \cdot \rVert'_\mathrm{ab}$, there exists a Lie group morphism $\vphi \in \mathrm{Aut}(G)$ such that the following hold
 \begin{enumerate}[(i)]
 \item $\vphi:(G,d)\to(G,d')$ is an isometry;
 \item $\pi_\mathrm{ab} \circ \vphi = \pi_\mathrm{ab}$, in particular there exists a linear map $L \colon G_\mathrm{ab} \to [\g,\g]$ such that
 \begin{equation*}
 \vphi(p) = p \cdot \exp\left(L(\pi_\mathrm{ab}(p))\right), \quad \text{for every $p \in G$.}
 \end{equation*}
 \end{enumerate}
\end{lemma}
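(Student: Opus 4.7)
The plan is to produce $\varphi$ by integrating a Lie algebra automorphism $\Phi\colon\g\to\g$ that restricts to the identity on $[\g,\g]$ and to a linear isometry between the two horizontal spaces at the identity. Write $V := \Delta_e$ and $V' := \Delta'_e$; the hypotheses $V \oplus [\g,\g] = \g = V' \oplus [\g,\g]$, combined with the bracket-generating condition in a $2$-step algebra, force $[V,V] = [V',V'] = [\g,\g]$. In particular, both $V$ and $V'$ are linearly isomorphic to $G_\mathrm{ab}$ via the respective restrictions of $\pi_\mathrm{ab}$.

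The first step is to identify the abelianization norms with the restrictions of the inner products. Under the assumption $V \oplus [\g,\g] = \g$, the straight segment $t \mapsto \exp(tv)$ is $\Delta$-horizontal of length $\sqrt{\rho_e(v,v)}$, giving $d(e,\exp(v)) \le \sqrt{\rho_e(v,v)}$; combined with the defining submetry property of $\lVert\cdot\rVert_\mathrm{ab}$, this forces $\pi_\mathrm{ab}|_V\colon (V,\sqrt{\rho_e(\cdot,\cdot)}) \to (G_\mathrm{ab},\lVert\cdot\rVert_\mathrm{ab})$ to be a linear isometry, and analogously for $V'$. The assumption $\lVert\cdot\rVert_\mathrm{ab}=\lVert\cdot\rVert'_\mathrm{ab}$ therefore implies that the linear map
\[
\psi := (\pi_\mathrm{ab}|_{V'})^{-1} \circ \pi_\mathrm{ab}|_V \colon V \to V'
\]
is a linear isometry from $(V,\rho_e|_V)$ to $(V',\rho'_e|_{V'})$. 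By construction $\psi(v) - v \in [\g,\g]$, so $\psi(v) = v + \ell(v)$ for a unique linear $\ell\colon V \to [\g,\g]$.

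Next, I extend $\psi$ to $\Phi\colon\g\to\g$ by declaring $\Phi|_V := \psi$ and $\Phi$ to be the identity on $[\g,\g]$. Since $\g$ is $2$-step, $[\g,\g]$ is central, and a direct calculation yields $[\psi(v),\psi(w)] = [v + \ell(v), w + \ell(w)] = [v,w] = \Phi([v,w])$ for $v,w \in V$, so $\Phi$ is a Lie algebra automorphism. Integrating via the simply-connected hypothesis, one obtains $\varphi \in \mathrm{Aut}(G)$. For (i), since $\de\varphi_e$ carries $V = \Delta_e$ isometrically to $V' = \Delta'_e$ and both metrics are left-invariant, $\varphi$ sends $\Delta$-horizontal curves to $\Delta'$-horizontal curves of the same length, hence is an isometry. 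For (ii), the product rule \eqref{def G} together with the centrality of $\ell(v)$ yields $\varphi(\exp(v + z)) = \exp(v + z) \cdot \exp(\ell(v))$ for $v \in V$, $z \in [\g,\g]$; this gives $\pi_\mathrm{ab} \circ \varphi = \pi_\mathrm{ab}$ and the explicit formula $\varphi(p) = p \cdot \exp(L(\pi_\mathrm{ab}(p)))$ with $L := \ell \circ (\pi_\mathrm{ab}|_V)^{-1}$.

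I do not expect any truly difficult step. The point requiring most care is the identification of the abelianization norm with the restriction of $\sqrt{\rho_e(\cdot,\cdot)}$ to $V$: this is where the hypothesis $\Delta_e \oplus [\g,\g] = \g$ is essential, since without it the abelianization norm would only be a quotient of the restricted inner product rather than the restriction itself, and the natural map $V \to V'$ would no longer be forced to be a linear isometry. Everything else is made clean by the centrality of $[\g,\g]$ in the step-$2$ setting, which makes the bracket insensitive to the vertical correction $\ell$ and automatically guarantees that $\Phi$ is a Lie algebra homomorphism.
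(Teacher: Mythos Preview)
Your proposal is correct and follows essentially the same route as the paper: both construct the Lie algebra automorphism that fixes $[\g,\g]$ and sends $\Delta_e$ to $\Delta'_e$ as the graph of a linear map $\ell\colon \Delta_e\to[\g,\g]$, then verify it is an isometry by identifying each horizontal inner product with the abelianization norm via the restricted projection $\pi_\mathrm{ab}|_{\Delta_e}$. The only cosmetic difference is the order of exposition---the paper first writes down $f(x,y)=(x,y+Lx)$ and then checks the isometry, whereas you first establish the isometry $\psi$ and then extend---but the map and the verifications are identical.
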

\begin{proof}
By~\eqref{min bracket generating}, there is a linear map $L:\Delta_e\to[\g,\g]$ such that $\Delta'_e$ is the graph of $L$, that is, 
\[
\Delta'_e = \{(x,Lx)\in \Delta_e \oplus [\g,\g] : x\in\Delta_e \} .
\]
Define $f:\g\to\g$ by $f(x,y) = (x,y+Lx)$ for all $x\in\Delta_e$ and $y\in[\g,\g]$.
Since $f$ is a linear isomorphism with $f([\g,\g])\subseteq[\g,\g]$, and since $\g$ has step 2, 
then $f$ is a Lie algebra automorphism.
Let $\vphi \coloneq \exp \circ f \circ \exp^{-1}$ be the corresponding Lie group automorphism.
Clearly $\vphi$ satisfies (ii).

Notice that the abelianization map $\pi_\mathrm{ab}:\g\to\g/[\g,\g]$ is a linear isomorphism when restricted to $\Delta_e$ and $\Delta'_e$.
It is an easy fact that these restrictions 
$(\Delta_e,\sqrt{\rho_e})\to (\g/[\g,\g],\lVert \cdot \rVert_\mathrm{ab})$
and $(\Delta'_e,\sqrt{\rho'_e})\to (\g/[\g,\g],\lVert \cdot \rVert'_\mathrm{ab})$
of $\pi_\mathrm{ab}$
are submetries (see \cite[Definition~10.1.1]{donne2024metricliegroupscarnotcaratheodory}) and thus isometries.
Therefore,
\[
\sqrt{\rho_e((x,0),(x,0))} = \lVert \pi_\mathrm{ab}(x,0)\rVert_\mathrm{ab}
= \lVert \pi_\mathrm{ab}(x,Lx) \rVert'_\mathrm{ab}
= \sqrt{\rho'_e((x,Lx),(x,Lx))} .
\]
We conclude that the restriction $f:(\Delta_e,\rho_e)\to(\Delta'_e,\rho'_e)$ is an isometry and therefore $\vphi:(G,d)\to(G,d')$ is an isometry.
\end{proof}


Instead of \cref{equivalence quasi_isometries intro}, we actually prove the following sharper statement.

 \begin{theorem} \label{equivalence quasi_isometries}
 Let $G$ be a simply connected, $2$-step nilpotent Lie group equipped with two sub-Riemannian structures $(\Delta,\rho)$ and $(\Delta',\rho')$ and denote by $d$ and $d'$ the sub-Riemannian metrics corresponding to $(\Delta,\rho)$ and $(\Delta',\rho')$, respectively. The following are equivalent:
 \begin{enumerate}[(i)]
 \item the two metrics are asymptotic, i.e., $\frac{d'(p,q)}{d(p,q)} \to 1$ as $d(p,q) \to \infty$;
 \item the abelianization norms $\lVert \cdot \rVert_\mathrm{ab}$ and $\lVert \cdot \rVert'_\mathrm{ab}$ of $(G,d)$ and $(G,d')$, respectively, coincides;
 \item the identity map $\mathrm{id} \colon (G,d) \to (G,d')$ is a rough isometry, i.e., there exists $C>0$ such that $$\lvert d(p,q) - d'(p,q) \rvert \le C, \quad \text{for every $p,q \in G$.}$$
 \end{enumerate}
 \end{theorem}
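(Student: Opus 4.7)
The plan is to prove the cyclic chain $(iii) \Rightarrow (i) \Rightarrow (ii) \Rightarrow (iii)$.

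The implication $(iii) \Rightarrow (i)$ is immediate, since $\lvert d'(p,q)/d(p,q) - 1 \rvert \le C/d(p,q) \to 0$. For $(i) \Rightarrow (ii)$, I would invoke Pansu's theorem \cite{MR741395}: for every $v \in \g$, combining the submetry bound $d(\id, \exp(tv)) \ge t \lVert \pi_\mathrm{ab}(v) \rVert_\mathrm{ab}$ with the Pansu scaling limit on the horizontal part (and the ball-box $\sqrt{t}$ control on the vertical drift) yields
\begin{equation*}
    \lim_{t \to \infty} d(\id, \exp(tv))/t = \lVert \pi_\mathrm{ab}(v) \rVert_\mathrm{ab}.
\end{equation*}
Applied to both $d$ and $d'$, the asymptotic hypothesis forces the two limits to agree for every $v$, yielding $\lVert \cdot \rVert_\mathrm{ab} = \lVert \cdot \rVert'_\mathrm{ab}$.

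The main direction is $(ii) \Rightarrow (iii)$, which I would split into three steps. First, let $d_\infty$ and $d'_\infty$ denote the canonical asymptotic metrics of $(G,d)$ and $(G,d')$. By \cref{main theorem}, the quantities $|d - d_\infty|$ and $|d' - d'_\infty|$ are bounded; and since the abelianization unit ball $\pi(\{v \in \Delta_\id : \rho(v,v) \le 1\})$ coincides with the unit ball of $\rho$ restricted to $V$ (using $V \perp_\rho (\Delta_\id \cap [\g,\g])$), the abelianization norms of $d$ and $d_\infty$ agree, and similarly for $d'$ and $d'_\infty$. Hence it suffices to prove (iii) for $d_\infty$ and $d'_\infty$, both of which arise from distributions satisfying the minimal condition $\Delta_\id \oplus [\g,\g] = \g$ required by \cref{same projection}. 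Second, \cref{same projection} produces $\vphi \in \mathrm{Aut}(G)$ of the form $\vphi(p) = p \cdot \exp(L(\pi_\mathrm{ab}(p)))$, with $L \colon G_\mathrm{ab} \to [\g,\g]$ linear, such that $\vphi \colon (G, d_\infty) \to (G, d'_\infty)$ is an isometry.

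Third, I would exploit the algebraic form of $\vphi$ together with \cref{powerful_lemma}. Since $\exp([\g,\g])$ lies in the center of the $2$-step nilpotent group $G$, commuting the central factors past $p^{-1}q$ gives
\begin{equation*}
    \vphi^{-1}(p)^{-1}\vphi^{-1}(q) = p^{-1}q \cdot \exp\bigl(-L(\pi_\mathrm{ab}(p^{-1}q))\bigr).
\end{equation*}
The isometry property, left-invariance, and \cref{powerful_lemma} applied to $d_\infty$ then yield
\begin{equation*}
    d'_\infty(p,q)^2 = d_\infty(\id, \vphi^{-1}(p)^{-1}\vphi^{-1}(q))^2 \le d_\infty(p,q)^2 + C \lvert L(\pi_\mathrm{ab}(p^{-1}q)) \rvert \le d_\infty(p,q)^2 + C' d_\infty(p,q),
\end{equation*}
where the last inequality uses that $L$ is a bounded linear map and that $\pi_\mathrm{ab} \colon (G, d_\infty) \to (G_\mathrm{ab}, \lVert \cdot \rVert_\mathrm{ab})$ is a submetry, so $\lVert \pi_\mathrm{ab}(p^{-1}q) \rVert_\mathrm{ab} \le d_\infty(p,q)$. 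Factoring the difference of squares produces $d'_\infty(p,q) - d_\infty(p,q) \le C'$; the symmetric argument with $\vphi$ in place of $\vphi^{-1}$ and \cref{powerful_lemma} applied to $d'_\infty$ gives the reverse inequality. Together with step one, this establishes (iii).

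The crux is step three: \cref{powerful_lemma} provides a correction to the \emph{squared} distance that is \emph{linear} in $\lvert \zeta \rvert$, considerably stronger than the $\sqrt{\lvert \zeta \rvert}$ bound obtainable from the triangle inequality combined with the ball-box theorem. Only this linear improvement, paired with the Lipschitz estimate $\lvert L(\pi_\mathrm{ab}(p^{-1}q)) \rvert \lesssim d_\infty(p,q)$, yields a correction to $d'_\infty(p,q)^2 - d_\infty(p,q)^2$ of order $d_\infty(p,q)$, which exactly cancels upon taking square roots to produce the desired uniform bound.
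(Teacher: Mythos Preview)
Your proposal is correct and follows essentially the same approach as the paper: reduce to the canonical asymptotic metrics via \cref{main theorem} (the paper cites \cref{bounded distance cor}, which is the immediate consequence you use), invoke \cref{same projection} to realize one metric as the pullback of the other by an automorphism of the form $p \mapsto p \cdot \exp(L(\pi_\mathrm{ab}(p)))$, and then apply \cref{powerful_lemma} together with the submetry bound $\lVert \pi_\mathrm{ab}(p^{-1}q) \rVert_\mathrm{ab} \le d_\infty(p,q)$ to control the squared distances. The only cosmetic differences are that the paper sets $p = \id$ by left-invariance rather than carrying general $p,q$, and that for $(i)\Leftrightarrow(ii)$ the paper simply cites \cite[Proposition~3]{MR3153949} instead of sketching the Pansu-type limit you outline.
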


\begin{proof}[Proof of \cref{equivalence quasi_isometries}]
 The equivalence $(i)\Leftrightarrow(ii)$ is known to hold for simply connected sub-Finsler nilpotent Lie groups; see \cite[Proposition~3]{MR3153949}. The implication $(iii)\Rightarrow(i)$ is immediate. We now prove that $(i, ii)\Rightarrow(iii)$.

 Denote by $\g$ the Lie algebra of $G$. Thanks to \cref{bounded distance cor}, it is not restrictive to assume that $d$ and $d'$ are canonical asymptotic metrics, for which \eqref{min bracket generating} is satisfied. From \cref{same projection}, there exists a Lie group morphism $\vphi \in \mathrm{Aut}(G)$ and a linear map $L \colon G_\mathrm{ab} \to [\g,\g]$ such that
 \begin{align}
 d'(p,q) &= d(\vphi(p),\vphi(q)), \quad \text{for every $p,q \in G$}, \label{phi d}\\
 \vphi(p) &= p \cdot \exp\left(L(\pi_\mathrm{ab}(p))\right), \quad \text{for every $p \in G$.} \label{phi L}
 \end{align}
 Thanks to \cref{powerful_lemma}, there exists $C_1 > 0$ such that 
 \begin{equation} \label{pitagora_ballbox4}
 d(e,q \cdot \exp(\zeta))^2 \le d(e,q)^2 + C_1 \lvert \zeta \rvert, \quad \text{for every $q \in G$ and $\zeta \in [\g,\g]$.}
 \end{equation}
 We want to bound $\lvert d(p,q) - d'(p,q)\rvert$, since $d$ and $d'$ are both left-invariant, it is not restrictive to consider $p=\id$. Fix $q \neq \id$ and a norm $\lvert \cdot \rvert$ on $[\g,\g]$. We estimate
 \begin{eqnarray*}
 d'(\id,q)^2 &\stackrel{\eqref{phi d}}{=}& d(\vphi(\id),\vphi(q))^2 \\
 &\stackrel{\eqref{phi L}}{=}& d(\id,q \cdot \exp\left(L(\pi_\mathrm{ab}(q))\right))^2 \\
 &\stackrel{\eqref{pitagora_ballbox4}}{\le}& d(\id,q)^2 + C_1\lvert L(\pi_\mathrm{ab}(q)) \rvert \\
 &\le& d(\id,q)^2 + C_1\lVert L \rVert \lvert \pi_\mathrm{ab}(q) \rvert \\
 &\le& d(\id,q)^2 + C_1 \lVert L \rVert d(\id,q),
 \end{eqnarray*}
where $\lVert L \rVert$ denotes the operator norm of $L \colon (G_\mathrm{ab},\lVert \cdot \rVert_\mathrm{ab}) \to ([\g,\g],\lvert \cdot \rvert)$ and in the last inequality we used the fact that $\pi_\mathrm{ab}$ is a submetry. We obtain
\begin{equation} 
 d'(\id,q)^2 \le d(\id,q)^2 + C_1 \lVert L \rVert d(\id,q) \le (d(\id,q)^2 + C_1 \lVert L \rVert)^2,
\end{equation}
and therefore $d'(e,q) - d(e,q) \le C_1 \lVert L \rVert$.
Similarly, thanks to \cref{powerful_lemma}, there exists $C_2 > 0$ such that 
 \begin{equation*} 
 d'(e,q \cdot \exp(\zeta))^2 \le d'(e,q)^2 + C_2 \lvert \zeta \rvert, \quad \text{for every $q \in G$ and $\zeta \in [\g,\g]$.}
 \end{equation*}
 An analogous argument shows that $d(\id,q) - d'(\id,q) \le C_2 \lVert L \rVert$. The statement is then proved by setting $C \coloneq \max\set{C_1,C_2} \lVert L \rVert$.
\end{proof}

\section{Consequences, examples, and sharpness of results} \label{consequences}

\subsection{Rate of convergence to the asymptotic cone} \cref{main theorem} provides a finer estimate on the rate of convergence of each simply connected $2$-step nilpotent sub-Riemannian Lie group $(G,d)$ to its asymptotic cone $(\widehat{G},\widehat{d})$. We refer to \cite[Chapters 7,8]{MR1835418} for the notion of Gromov-Hausdorff distance between metric spaces, Gromov-Hausdorff limit, and asymptotic cone of a metric space.

\begin{proposition} \label{prop_rate_convergence}
 Let $(G,d)$ be a simply connected $2$-step nilpotent sub-Riemannian Lie group and denote by $(\widehat{G},\widehat{d})$ its asymptotic cone. Then
 \begin{equation} \label{rate_convergence}
 d_{GH}\left((G,\tfrac{1}{n}d),(\widehat{G},\widehat{d})\right) = O(n^{-1}), \quad \text{as $n \to\infty$,}
 \end{equation}
 where $d_{GH}$ denotes the Gromov-Hausdorff distance.
\end{proposition}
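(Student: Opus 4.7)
The plan is to deduce \eqref{rate_convergence} directly from the rough-isometry statement in \cref{bounded distance cor}, using the homogeneity of the Carnot metric $d_\infty$.

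First I would identify the asymptotic cone. As recalled in \cref{section_asymptotic_metric}, when $G$ is simply connected and $2$-step nilpotent, the canonical asymptotic metric $d_\infty$ on $G$ realizes the asymptotic cone: $(\widehat{G},\widehat{d})$ is isometric to $(G,d_\infty)$. Under this identification, since $d_\infty$ comes from a Carnot structure on $G$, the dilation $\delta_{1/n}\colon G\to G$ of the associated stratification satisfies
\begin{equation*}
d_\infty(\delta_{1/n}(p),\delta_{1/n}(q)) \;=\; \tfrac{1}{n}\,d_\infty(p,q), \qquad p,q\in G,
\end{equation*}
so $\delta_{1/n}\colon (\widehat{G},\widehat{d})\to (G,\tfrac{1}{n}d_\infty)$ is a bijective isometry. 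In particular $(G,\tfrac{1}{n}d_\infty)$ and $(\widehat{G},\widehat{d})$ have Gromov--Hausdorff distance zero.

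Next I would invoke \cref{bounded distance cor}: there exists $C>0$ such that
\begin{equation*}
\lvert d(p,q)-d_\infty(p,q)\rvert \;\le\; C \quad\text{for every } p,q\in G.
\end{equation*}
Dividing by $n$, the identity map $\mathrm{id}\colon (G,\tfrac{1}{n}d)\to (G,\tfrac{1}{n}d_\infty)$ is a surjective $(1,C/n)$-quasi-isometry, i.e.\
\begin{equation*}
\bigl\lvert \tfrac{1}{n}d(p,q)-\tfrac{1}{n}d_\infty(p,q)\bigr\rvert \;\le\; \tfrac{C}{n}.
\end{equation*}
By the standard relationship between rough isometries with dense image and the Gromov--Hausdorff distance (see, e.g., the cited \cite{MR1835418}), this yields
\begin{equation*}
d_{GH}\!\left((G,\tfrac{1}{n}d),(G,\tfrac{1}{n}d_\infty)\right) \;\le\; \tfrac{C}{n}.
\end{equation*}

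Finally, I would combine the two steps via the triangle inequality for $d_{GH}$:
\begin{equation*}
d_{GH}\!\left((G,\tfrac{1}{n}d),(\widehat{G},\widehat{d})\right) \;\le\; d_{GH}\!\left((G,\tfrac{1}{n}d),(G,\tfrac{1}{n}d_\infty)\right) + d_{GH}\!\left((G,\tfrac{1}{n}d_\infty),(\widehat{G},\widehat{d})\right) \;\le\; \tfrac{C}{n},
\end{equation*}
which is \eqref{rate_convergence}. There is no genuine obstacle here once \cref{main theorem} and \cref{bounded distance cor} are in hand; the only point requiring a little care is the book-keeping for the GH distance via the standard dictionary between $(1,\varepsilon)$-quasi-isometries with $\varepsilon$-dense image and $\varepsilon$-approximations, but since the identity is actually surjective the argument reduces to constructing the trivial correspondence $\{(p,p):p\in G\}$ and bounding its distortion by $C/n$.
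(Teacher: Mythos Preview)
Your proof is correct and follows essentially the same route as the paper: identify $(\widehat{G},\widehat{d})$ with $(G,d_\infty)$, use the self-similarity of $d_\infty$ to pass from $(G,\tfrac{1}{n}d_\infty)$ to the cone, and apply \cref{bounded distance cor} together with the standard GH bound for metrics at bounded difference. The only cosmetic difference is that the paper quotes the constant $2C/n$ from \cite[Corollary~7.3.28]{MR1835418} rather than $C/n$, which is immaterial for the $O(n^{-1})$ conclusion.
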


\begin{proof}
 Let $(G,d)$ be as in the statement and denote by $(G,d_\infty)$ its canonical asymptotic metric, defined in \eqref{asymptotic metric}. As discussed in \cref{section_asymptotic_metric}, $(G,d_\infty)$ is isometric to the asymptotic cone $(\widehat{G},\widehat{d})$ of $(G,d)$. From \cref{bounded distance cor}, there exists $C>0$ such that
\begin{equation*}
 \left\lvert \frac{1}{n}d_\infty(p,q) - \frac{1}{n}d(p,q) \right\rvert \le \frac{C}{n} \quad \text{for every $p,q \in G$, and $n \in \N$.}
\end{equation*}
Thus, we infer (see \cite[Corollary 7.3.28]{MR1835418}) that
\begin{equation} \label{gh_inequality1}
 d_{GH}\left((G,\tfrac{1}{n}d),(G,\tfrac{1}{n}d_\infty)\right) \le \frac{2C}{n}.
 \end{equation}
We recall that $(G,d_\infty)$ is a Carnot metric and therefore it is self-similar, i.e., $(G,d_\infty)$ is isometric to $(G,\lambda d_\infty)$ for every $\lambda > 0$. We conclude that $(G,\tfrac{1}{n}d_\infty)$ is isometric to $(\widehat{G},\widehat{d})$. We have proved \eqref{rate_convergence}. 
\end{proof}

The latter result improves similar known estimates for simply connected, $2$-step nilpotent sub-Finsler Lie groups. In \cite{MR3153949}, it is proved that
\begin{equation*}
 d_{GH}\left((B(n),\tfrac{1}{n}d),(\widehat{B}(1),\widehat{d})\right) = O(n^{-\frac{1}{2}}), \quad \text{as $n \to\infty$,} 
 \end{equation*}
 where $B(n) \coloneq \set{p \in G \colon d(0,p) \le n}$ and $\widehat{B}(1) \coloneq \set{p \in \widehat{G} \colon \widehat{d}(0,p) \le 1}$. The authors provided an example of a $2$-step nilpotent sub-Finsler Lie group $(G,d)$ for which the exponent $-\tfrac{1}{2}$ is sharp, and in particular it is not $(1,C)$-quasi-isometric to its asymptotic cone.

\subsection{The non-validity for 3-step Riemannian metrics}

 We show that the consequence of \cref{main theorem} cannot be extended to sub-Riemannian nilpotent Lie groups of higher step. We provide an example of a Riemannian metric $d$ on the Engel group that is not at finite distance from its canonical asymptotic metric.

 \begin{example}
 The \emph{Engel group} $\E$ is the simply connected Lie group whose associated Lie algebra $\g$ admits a basis $\set{X_1,X_2,Y,Z}$ such that the only non-trivial bracket relations between elements of the basis are
 \begin{equation*}
 [X_1,X_2]=Y, \qquad [X_1,Y]=Z.
 \end{equation*}
 We consider the left-invariant Riemannian metric $d$ on $\E$ such that $\set{X_1,X_2,Y,Z}$ is an orthonormal frame for the metric tensor. Since $[\g,\g]=\spn\set{Y,Z}$, we get that the canonical asymptotic metric $d_\infty$ associated with $d$ is the metric $d_\infty$ induced by the sub-Riemannian structure $(\Delta,\rho_{|\Delta})$, where
 \begin{equation*}
 \Delta = \bigcup_{p \in \E} \spn\set{\de L_p(X_1), \de L_p(X_2)}.
 \end{equation*}
 We identify $\E$ and $\R^4$ with the global diffeomorphism
 \begin{equation*}
 \Phi \colon \R^4 \to \E, \quad (x_1,x_2,y,z) \mapsto \exp(x_1X_1+x_2X_2+yY+zZ).
 \end{equation*}
 With this identification, we define $p_n \coloneq (0,n,0,\sqrt{n}) \in \E$ for every $n \in \N$. By considering the straight line $\gamma_n(t)\coloneq(0,tn,0,t\sqrt{n})$, we get the estimate
 \begin{equation} \label{d_riemannian}
 d(e,p_n) \le \sqrt{n^2 + n} \le n + \tfrac{1}{2}.
 \end{equation}
 We stress that $(\E,d_\infty)$ is self-similar, in particular
 \begin{equation} \label{dilation_engel}
 d_\infty(e,(x_1,x_2,y,z)) = \lambda d_\infty(e,(\lambda^{-1} x_1, \lambda^{-1} x_2, \lambda^{-2} y, \lambda^{-3} z)), \quad \text{for every $(x_1,x_2,y,z) \in \R^4, \, \lambda \ge 0$.}
 \end{equation}
 Moreover, from \cite[Lemma~4.1]{MR3906166}, there exists $C>0$ such that
 \begin{equation} \label{cusp_engel}
 d_\infty(e,(0,1,0,\varepsilon)) \ge 1 + C\varepsilon^{1/3}, \quad \text{for every $\varepsilon \in [0,1)$.}
 \end{equation}
 Therefore, we get the estimate
 \begin{equation*}
 d_\infty(e,p_n) \stackrel{\eqref{dilation_engel}}{=} n d_\infty(e,(0,1,0,n^{-\frac{5}{2}})) \stackrel{\eqref{cusp_engel}}{\ge} n (1 + Cn^{-\frac{5}{6}})
 = n + Cn^{\frac{1}{6}}.
 \end{equation*}
 By comparing it with \eqref{d_riemannian}, we get
 \begin{equation*}
 \lvert d_\infty(e,p_n)-d(e,p_n)\rvert \ge Cn^{\frac{1}{6}} - \tfrac{1}{2} \to \infty, \quad \text{as $n \to \infty$.}
 \end{equation*}

 \end{example}

 \subsection{Volume growth of metric balls} In the study of groups of polynomial growth, a key aspect is to determine the volume asymptotics of metric balls $B_r$. The leading term of this expansion for nilpotent groups was first characterized in \cite{MR741395}. The error term, however, is far from being understood, even in the class of $2$-step nilpotent metric groups, see \cite{MR3153949}. Nevertheless, we point out the following general argument.
 
 \begin{remark} \label{error_term}
 Let $(G,d, \vol)$ be a metric measure space such that for some $\id\in G$ and $C,Q, r_0>0$ we have
 \begin{equation}\label{homog_meas}
 \vol(B(\id, r))=Cr^Q, \quad \text{for every $r \ge r_0$.}
 \end{equation}
 Consider on $G$ a distance function $d'$ such that for some function $f(r) = O(r)$ as $r \to \infty$ we have
 \begin{equation} \label{error_distance}
 \lvert d(\id,p) - d'(\id,p) \rvert \le f(d(\id,p)), \quad \text{for every $p \in G$}.
 \end{equation}
 Denote by $B(r)$ and $B'(r)$ the balls of radius $r$, centered at $\id$, for the distance functions $d$ and $d'$, respectively. We claim that
 \begin{equation*}
 \vol(B'(r)) = Cr^Q + O(r^{Q-1}f(r)), \quad \text{as $r\to \infty$.}
 \end{equation*}
 Indeed, from \eqref{error_distance}, we get
 \begin{equation*}
 B(r-f(r)) \subseteq B'(r) \subseteq B(r + f(r)), \quad \text{for every $r \ge 0$,},
 \end{equation*}
 and consequently from \eqref{homog_meas} we deduce 
 \begin{equation*}
 Cr^Q + O(r^{Q-1}f(r)) = C(r-f(r))^Q \le \vol(B'(r)) \le C(r+f(r))^Q = Cr^Q + O(r^{Q-1}f(r)),
 \end{equation*}
 where in the equalities we have used $f(r) = O(r)$, as $r \to \infty$. 
 \end{remark}

 Our result \cref{main theorem}, together with \cref{error_term}, provides a fine estimate for this error term in the class of sub-Riemannian $2$-step nilpotent Lie groups.

 \begin{theorem} \label{asymptotic_sphere}
 Let $(G,d)$ be a simply connected $2$-step nilpotent sub-Riemannian Lie group and fix a Haar measure $\vol$ on $G$. For every $r \ge 0$, denote by $B(r) \subseteq G$ the metric ball of radius $r$, centered at the identity element of $G$. Then the following holds:
 \begin{enumerate}[(i)]
 \item there exists $C > 0$ such that
 \begin{equation*}
 \vol(B(r)) = C r^Q + O(r^{Q-2}), \quad \text{as $r \to \infty$,}
 \end{equation*}
 where $Q$ is the Hausdorff dimension of the asymptotic cone of $(G,d)$,
 \item if $\vol(B(r)) = C r^Q + o(r^{Q-2})$, as $r \to \infty$, then the metric Lie group $(G,d)$ coincides with its canonical asymptotic metric. In particular $\vol(B(r)) = C r^Q$.
 \end{enumerate}
 \end{theorem}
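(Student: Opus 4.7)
The plan is to bootstrap the volume asymptotics for the Carnot metric $d_\infty$ (where self-similarity gives an exact formula) to the original sub-Riemannian metric $d$, using the $L^\infty$-comparison of squared distances provided by \cref{main theorem}.

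First I would translate \cref{main theorem} together with the trivial bound $d \le d_\infty$ from \eqref{obvious bound} into a pair of ball inclusions. From $d \le d_\infty$ we get $B_\infty(r) \subseteq B(r)$, and from $d_\infty(e,p)^2 \le d(e,p)^2 + C$ (applied with $p = e$ fixed after a left-translation, which preserves $\vol$) we get $B(r) \subseteq B_\infty(\sqrt{r^2+C})$. Hence
\begin{equation*}
    \vol(B_\infty(r)) \le \vol(B(r)) \le \vol(B_\infty(\sqrt{r^2+C})).
\end{equation*}

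Next I would use the self-similarity of $(G,d_\infty)$. Since $d_\infty$ is the canonical asymptotic metric of a $2$-step nilpotent Lie group, by \cref{section_asymptotic_metric} it coincides with a Carnot metric on $G$; in particular it admits a one-parameter family of dilations $\delta_\lambda$ which scale $d_\infty$ by $\lambda$ and which multiply the fixed Haar measure by $\lambda^Q$, where $Q = \dim V + 2\dim[V,V]$ equals the Hausdorff dimension of the asymptotic cone. Setting $C_0 \coloneq \vol(B_\infty(1))$, this yields $\vol(B_\infty(r)) = C_0\, r^Q$, so the display above becomes
\begin{equation*}
    C_0\, r^Q \;\le\; \vol(B(r)) \;\le\; C_0\, (r^2+C)^{Q/2}.
\end{equation*}

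Finally I would expand the upper bound by the binomial series: $(r^2+C)^{Q/2} = r^Q + \tfrac{Q}{2}C\, r^{Q-2} + O(r^{Q-4})$ as $r\to\infty$. Subtracting the lower bound gives
\begin{equation*}
    0 \le \vol(B(r)) - C_0\, r^Q \le \tfrac{Q}{2} C C_0\, r^{Q-2} + O(r^{Q-4}),
\end{equation*}
which is exactly the claim with constant $C_0$. There is essentially no hidden obstacle here: the entire argument is a short squeeze once \cref{main theorem} is in hand. The only point deserving a brief remark is the identification of the dimension exponent, namely that the Hausdorff dimension $Q$ of the asymptotic cone equals the homogeneous dimension of the Carnot group $(G,d_\infty)$ and hence governs the scaling of the Haar measure under the $d_\infty$-dilations; this is standard for Carnot groups and can be cited from \cite{donne2024metricliegroupscarnotcaratheodory}.
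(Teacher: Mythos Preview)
Your proposal is correct and follows essentially the same approach as the paper: sandwich $B(r)$ between $B_\infty(r)$ and $B_\infty(\sqrt{r^2+C})$ using \eqref{obvious bound} and \cref{main theorem}, then invoke the exact scaling $\vol(B_\infty(r))=C_0 r^Q$ of the Carnot metric and expand $(r^2+C)^{Q/2}$. The paper's proof is a bit terser but otherwise identical in structure and content.
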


 \begin{proof}
 Let $d_\infty$ be the canonical asymptotic metric of $(G,d)$. 
 We denote by $B_\infty(r)$ the metric ball of radius $r$ with respect to the distance function $d_\infty$, centered at the identity element of $G$. 

Note that, being $(G,d_\infty)$ a Carnot group,
 we have 
 \begin{equation} \label{volume_spheres}
     \vol(B_{\infty}(r))=Cr^Q, \quad \text{for every $r \ge 0$,}
 \end{equation}
 where $Q$ is the Hausdorff dimension of $(G,d_\infty)$ and $C \coloneq \vol\left(B_{\infty}(1)\right)$.
 Moreover, by \cref{main theorem}, we have \eqref{first consequence}. 
 Hence, we are in the setting of \cref{error_term} with $f(r)= O(1/r)$. 
 Therefore, we conclude 
 \begin{equation*}
 \vol(B(r)) = Cr^Q + O(r^{Q-1}\cdot r^{-1}) = Cr^Q + O(r^{Q-2}),
 \end{equation*}
 We thus proved (i).
 


 To prove (ii), assume that $(G,d)$ does not coincide with its canonical asymptotic metric $(G,d_\infty)$. Denote by $\g$ the Lie algebra of $G$, and by $(\Delta,\rho)$ and $(\Delta_\infty,\rho)$ the sub-Riemannian structures of $(G,d)$ and $(G,d_\infty)$, respectively. Since $d$ is not a canonical asymptotic metric, there exists $w \in \Delta \cap [\g,\g]$ such that $\rho(w,w)=1$. Let $W$ be the one-parameter subgroup generated by $w$. Since $W$ is a normal subgroup of $G$, we have the manifold identification 
 \begin{equation*}
 G \cong G/W \times W.
 \end{equation*}
 We recall from \cite[Proposition~7.1.9]{donne2024metricliegroupscarnotcaratheodory} that there exists a sub-Finsler metric $d'$ on $G/W$ such that the projection map $\pi \colon (G,d_\infty) \to (G/W,d')$ is a submetry. We denote by $B'(r)$ the metric ball of radius $r$ with respect to the distance function $d'$, centered at the identity element of $G/W$.

 We write the Haar measure $\vol$ on $G$ as the product measure of a Haar measure $\mu$ on $G/W$ and the Lebesgue measure on $W = \set{\exp(tw) \colon t \in \R} \cong \R$. We point out that, since $(G,d_\infty)$ is self-similar, then $(G/W, d')$ is self-similar as well, and that its Hausdorff dimension is $Q-2$; see \cite[Corollary~4.3.6]{donne2024metricliegroupscarnotcaratheodory}. Therefore, there exists $C'>0$ such that
 \begin{equation} \label{volume_proj}
 \mu \left(B'(r) \right) = C' r^{Q-2}, \quad \text{for every $r \ge 0$.}
 \end{equation}

 Next, for every $k,r \ge 0$, we define
 $E_{k,r} \coloneq \set{ p + \lambda w \colon p \in B_\infty\left( r - \tfrac{k^2}{r}\right), \, \lambda \in [-k,k] } \subseteq G$, and we claim that
 \begin{equation} \label{inclusion_spheres}
 E_{k,r} \subseteq B(r), \quad \text{for every $k \ge 0$ and $r \ge k$.}
 \end{equation}
 Indeed, fix $p \in B_\infty\left( r - \tfrac{k^2}{r}\right)$ and $\lambda \in [-k,k]$. Consider a $\Delta_\infty$-horizontal curve $\gamma_u \colon [0,1] \to G$, from $\id$ to $p$, such that $\int^1_0 \rho(u,u) \, \de t \le \left(r - \frac{k^2}{r}\right)^2$. Then $\gamma_{u+\lambda w}$ is a $\Delta$-horizontal curve and
 \begin{align*}
 \int^1_0 \rho(u+\lambda w, u+\lambda w) \, \de t = \int^1_0 \rho(u,u) \, \de t + \lambda^2 \le \left(r - \frac{k^2}{r}\right)^2 + k^2 = r^2 - k^2 + \frac{k^4}{r^2} \le r^2,
 \end{align*}
 where in the first equality we used \eqref{def V}, and in the last inequality we used that $r \ge k$.
 We conclude that $p+\lambda w = \gamma_{u+\lambda w}(1) \in B(r)$, and the inclusion \eqref{inclusion_spheres} is proved.

 We set $k \coloneq \frac{C'}{QC}$. For $r \ge k^2$, we estimate
 \begin{eqnarray*}
 \vol(B(r)) \stackrel{\eqref{inclusion_spheres}}{\ge} \vol(E_{k,r}) &=& \vol\left(B_\infty\left( r - \tfrac{k^2}{r}\right)\right) + 2k\mu\left(B'\left( r - \tfrac{k^2}{r}\right)\right) \\
 &\stackrel{\eqref{volume_spheres}\eqref{volume_proj}}{=}& C\left( r - \tfrac{k^2}{r}\right)^Q + 2kC'\left( r - \tfrac{k^2}{r}\right)^{Q-2} \\
 &=& C(r^Q - Qk^2 r^{Q-2} + o(r^{Q-2})) + 2kC'(r^{Q-2} + o(r^{Q-2})) \\
 &=& Cr^Q + k(2C' - kQC)r^{Q-2} + o(r^{Q-2}) \\
 &=& Cr^Q + kC'r^{Q-2} + o(r^{Q-2}),
 \end{eqnarray*}
 where in the first equality we used the definition of $E_{k,r}$ and we wrote $\vol$ as the product measure of $\mu$ with the Lebesgue measure on $W$. We infer that $\vol(B(r)) - Cr^Q \notin o(r^{Q-2})$ and the proof of (ii) is concluded.
 \end{proof}

We point out that \cref{asymptotic_sphere} shows that the estimate \eqref{main equation} is sharp in the following sense.

\begin{corollary} \label{sharpness}
 Let $(G,d)$ be a sub-Riemannian, $2$-step nilpotent Lie group, and denote by $d_\infty$ its canonical asymptotic metric. If
 \begin{equation*}
 \lvert d_\infty(p,q)^2 - d(p,q)^2 \rvert \to 0, \quad \text{as $d(p,q) \to \infty$.}
 \end{equation*}
 then $d=d_\infty$.
\end{corollary}

 \begin{proof} From the assumption, we get 
 $ \lvert d_\infty(\id,p) - d(\id,p) \rvert = o(1/d(\id,p)) $, as $d(\id,p)\to \infty$.
 By \cref{error_term},
we get
 \begin{equation*}
 \vol(B(r)) = C r^Q + o(r^{Q-2}), \quad \text{as $r\to \infty$.}
 \end{equation*}
 Thanks to (ii) in \cref{asymptotic_sphere}, we infer that $d=d_\infty$.
 \end{proof}

\subsection{The non-validity for Finsler metrics in the Heisenberg group}
 We show that both consequences of \cref{asymptotic_sphere} do not extend to the setting of 2-step nilpotent sub-Finsler Lie groups by providing two examples of Finsler metrics on the Heisenberg group.
 
 \begin{definition}
 The \emph{Heisenberg Lie algebra} is the Lie algebra $\mathfrak{h}$ admitting a basis $\set{X,Y,Z}$ such that the only non-trivial bracket relation between elements of the basis is $[X,Y]=Z$. The \emph{Heisenberg group} is the Lie group $\H \coloneq (\mathfrak{h},*)$, with $*$ as in \eqref{def G}. 
 The \emph{sub-Riemannian Heisenberg group} $(\H,d)$ is the sub-Riemannian Lie group such that $\set{X,Y}$ is an orthonormal frame of the metric tensor $\rho$ defined on the horizontal distribution $\Delta$, with $\Delta_\id = \spn\set{X,Y}$.
 \end{definition}

 For a thorough introduction to Finsler and sub-Finsler metrics in nilpotent Lie groups, and to the sub-Riemannian Heisenberg group, we refer to \cite{donne2024metricliegroupscarnotcaratheodory}.
 

\begin{proposition} \label{prop_linf}
 Let $(\H,d)$ be the sub-Riemannian Heisenberg group with sub-Riemannian structure $(\Delta,\rho)$. For $v \in \mathfrak{h}$, we define the norm
 \begin{equation} \label{linf_norm}
 \lVert v \rVert \coloneq \max \Set{ \sqrt{\rho(v_h,v_h)}, \lvert \lambda \rvert}, 
 \end{equation}
 where we write $v = v_h + \lambda Z$, with $v_h \in \spn \set{X,Y}$ and $\lambda\in \R$. Denote by $d_F$ the Finsler distance function associated with the Finsler norm \eqref{linf_norm}, and by $B_F(r)$ the ball of radius $r$ with respect to the distance function $d_F$, centered at the identity element of $\H$. Fix a Haar measure $\vol$ on $\H$, then there exists $C,D > 0$ such that
 \begin{equation} \label{polinomial inf}
 \vol \left(B_F(r)\right) = Cr^4 + Dr^3, \quad \text{for every $r \ge 0$.}
 \end{equation}
\end{proposition}

\begin{proof}
 Denote by $B(r)$ the ball of radius $r$ with respect to the sub-Riemannian distance function $d$, centered at the identity element $\id$ of $\H$. We claim that, for every $r \ge 0$, we have 
 \begin{equation} \label{equality_balls}
 B_F(r)= \set{p+ \lambda Z \,\colon \, p \in B(r), \, \lambda \in [-r,r]}.
 \end{equation}
 Indeed, fix $p \in B(r)$ and $\lambda \in [-r,r]$. Consider an energy-minimizing, $\Delta$-horizontal curve $\gamma_u \colon [0,1] \to \H$ from $\id$ to $p$. We get that $\sqrt{\rho(u(t),u(t))}\equiv c$ for some constant $c \le r$. We consider the control $\widetilde{u} \colon [0,1] \to \mathfrak{h}$ defined as $\widetilde{u}(t)=u(t) + \lambda Z$. From \cref{lemma integral}, we get that $\gamma_{\tilde{u}}(1) = p + \lambda Z$. Moreover
 \begin{equation*}
 \int^1_0 \lVert u(t) \rVert \, \de t = \int^1_0 \max \Set{\sqrt{\rho(u(t),u(t))}, \lvert \lambda \rvert} \, \de t \le \max \Set{c, \lvert \lambda \rvert}\le r.
 \end{equation*}
 This proves that $\set{p+zZ \,\colon \, p \in B(r), \, \lambda \in [-r,r]} \subseteq B_F(r)$. For the opposite inclusion, fix $q \in B_F(r)$ and consider a curve $\gamma_u \colon [0,1] \to \H$ such that $\int^1_0 \lVert u(t) \rVert \, \de t \le r$. 
 For every $t \in [0,1]$, we write $u(t) = u_h(t) + \lambda(t) Z$. In particular, $\gamma_{u_h}$ is a $\Delta$-horizontal curve and 
 \begin{equation*}
 \int^1_0 \sqrt{\rho(u_h(t),u_h(t))} \, \de t \le \int^1_0 \lVert u(t) \rVert \, \de t \le r,
 \end{equation*}
 which implies $\gamma_{u_h}(1) \in B(r)$. From \cref{lemma integral}, we get $q = \gamma_{u_h}(1) + \left(\int^1_0 \lambda(t) \de t\right)Z$. The proof of \eqref{equality_balls} is concluded by estimating
 \begin{equation*}
 \left\lvert \int^1_0 \lambda(t) \, \de t \right\rvert \le \int^1_0 \lvert \lambda(t) \rvert \, \de t \le \int^1_0 \lVert u(t)\rVert \, \de t \le r.
 \end{equation*}

 Next, we recall that the Haar measure $\vol$ on $\H$ is a multiple of the Lebesgue measure $\mathscr{L}^3$ on $\mathfrak{h} = \spn\set{X,Y,Z} \cong \R^3$. Moreover, the projection map $\pi \colon \mathfrak{h} \to \spn\set{X,Y}$, parallel to $Z$, is a submetry between $(\H,d)$ and the Euclidean distance on $\spn\set{X,Y} \cong \R^2$; see \cite[Proposition~7.1.9]{donne2024metricliegroupscarnotcaratheodory}. Finally, for $r \ge 0$, we compute
 \begin{eqnarray*}
 \vol(B_F(r)) &\stackrel{\eqref{equality_balls}}{=}& 
 \vol \left( \set{p+\lambda z \,\colon \, p \in B(r), \, \lambda \in [-r,r]} \right) \\
 &=& \vol(B(r)) + 2r \mathscr{L}^2(\pi(B(r))) \\
 &=& Cr^4 + Dr^3, 
 \end{eqnarray*}
 for some $C,D>0$.
\end{proof}

\begin{corollary} \label{no bounded carnot}
 Let $(\H,d_F)$ be the Finsler Heisenberg group defined as in \cref{prop_linf}. Then there are no Carnot metrics $d$ on $\H$ such that \eqref{no vanishing} holds.
\end{corollary} 
\begin{proof}
 Assume that there exists a Carnot metric $d$ such that $\lvert d(p,q) - d_F(p,q) \rvert \to 0$, as $d(p,q) \to \infty$.
 Then, by \cref{error_term},
 we get
 \begin{equation*}
 \vol(B_F(r)) = C r^4 + o(r^3), \quad \text{as $r\to \infty$,}
 \end{equation*}
 which is in contradiction with \eqref{polinomial inf}.
\end{proof}

\begin{proposition} \label{prop_ellone}
 Let $(\H,d)$ be the sub-Riemannian Heisenberg group with sub-Riemannian structure $(\Delta,\rho)$. For $v \in \mathfrak{h}$, we define the norm
 \begin{equation} \label{lone_norm}
 \lVert v \rVert \coloneq \sqrt{\rho(v_h,v_h)}+ \lvert \lambda \rvert, 
 \end{equation}
 where we write $v = v_h + \lambda Z$, with $v_h \in \spn \set{X,Y}$ and $\lambda\in\R$. Denote by $d_F$ the Finsler distance function associated with the Finsler norm \eqref{lone_norm}, and by $B_F(r)$ the ball of radius $r$ with respect to the distance function $d_F$, centered at the identity element of $\H$. Fix a Haar measure $\vol$ on $\H$, then there exists $r_0 > 0$ such that
 \begin{equation*}
 d(p,q) = d_F(p,q), \quad \text{for every $p,q \in \H$ such that $d(p,q)\ge r_0$.}
 \end{equation*}
 In particular, there exists $C>0$ such that $\vol(B_F(r))=Cr^4$ for every $r \ge r_0$.
\end{proposition}

\begin{proof}
 Since both metrics $d$ and $d_F$ are left-invariant, it is not restrictive to assume $p=\id$.
 Let us consider the projection map $\pi \colon \H \to \spn\set{X,Y}$, parallel to $Z$. From the complete description of geodesics in the sub-Riemannian Heisenberg group (see \cite[Section~2.4.1]{donne2024metricliegroupscarnotcaratheodory}), there exists $c>0$ such that for every $q \in \H$ and every $d$-minimizing curve $\gamma \colon [0,1] \to \H$, from $\id$ to $q$, we get
 \begin{equation} \label{existence_time}
 \lVert \pi(\gamma(t_0)) \rVert = c \cdot d(\id,q), \quad \text{for some $0 \le t_0 \le 1$.}
 \end{equation}
 Fix $q \in \H$ such that $d_F(\id,q) \ge 3c^{-1} + d(\id,Z)^2$, and consider a $d_F$-minimizing curve $\gamma_u \colon [0,1] \to \H$ from $e$ to $q$. For every $t \in [0,1]$, we write $u(t) = u_h(t) + \lambda(t) Z$. From \cref{lemma integral}, we get 
 \begin{equation*}
 q = \gamma_{u_h}(1) + \int^1_0 \lambda(t) \, \de t \cdot Z.
 \end{equation*}
 Moreover, since $\gamma_u$ is length-minimizing, we get
 \begin{equation} \label{lengu}
 \ell(\gamma_u) \stackrel{\eqref{lone_norm}}{=} \int^1_0 \sqrt{\rho(u_h(t),u_h(t))} \, \de t + \int^1_0 \lvert \lambda(t) \rvert \, \de t = d_F(\id,q) \ge 3c^{-1} + d(\id,Z)^2.
 \end{equation}
 First, we claim that \begin{equation} \label{first_claim}
 \int^1_0 \lvert \lambda(t) \rvert \, \de t \le d(\id,Z)^2. 
 \end{equation}
 Indeed, assume that $d(\id,Z)^2 < \int^1_0 \lvert \lambda(t) \rvert \, \de t $. Then, we estimate
 \begin{equation*}
 d\left(\id, \left(\int^1_0 \lambda(t) \, \de t \right) Z\right) = \sqrt{\left\lvert \int^1_0 \lambda(t) \, \de t \right\rvert} \cdot d(\id,Z) < \int^1_0 \lvert \lambda(t) \rvert \, \de t,
 \end{equation*}
 where in the equality we used that the metric space $(\H,d)$ is self-similar. The latter estimate implies that there exists a $\Delta$-horizontal curve $\gamma_{v} \colon [0,1] \to \H$, from $\id$ to $\left(\int^1_0 \lambda(t) \, \de t \right) Z$ such that $\ell(\gamma_{v}) < \int^1_0 \lvert \lambda(t) \rvert \, \de t$. Then, we consider the control $\widetilde{u} \colon [0,2] \to \mathfrak{h}$ defined by 
 \begin{equation*}
 \widetilde{u}(t) = \begin{cases}
 u(t) & \text{if $0\le t \le 1$,} \\ v(t-1) & \text{if $1<t\le2$.}
 \end{cases}
 \end{equation*}
 From \cref{lemma integral}, we get $\gamma_{\tilde{u}}(2) = \gamma_{u_h}(1) + \left(\int^1_0 \lambda(t) \, \de t \right) Z =q$. Moreover, we obtain
 $$\ell(\gamma_{\tilde{u}}) = \ell(\gamma_{u_h}) + \ell(\gamma_v) < \int^1_0 \sqrt{\rho(u_h(t),u_h(t))} \, \de t + \int^1_0 \lvert \lambda(t) \rvert \, \de t \stackrel{\eqref{lengu}}{=} \ell(\gamma_u),$$
 contradicting the minimality of $\gamma_u$. We thus proved \eqref{first_claim}. Consequently, thanks to \eqref{lengu}, we infer that $\ell(\gamma_{u_h}) \ge 3c^{-1}$.

 Next, we claim that $\gamma_{u_h}$ is a $d$-minimizing curve. Indeed, assume that there exists a $\Delta$-horizontal curve $\gamma_{u_h'}$ such that $\gamma_{u_h'}(1) = \gamma_{u_h}(1)$ and $\ell(\gamma_{u_h'}) < \ell(\gamma_{u_h})$. Then, the control $u' \colon [0,1] \to \mathfrak{h}$ defined by $u'(t) := u_h'(t) + \lambda(t)Z$ would satisfy $\gamma_{u'}(1)=q$ and $\ell(\gamma_{u'})<\ell(\gamma_u)$, contradicting the minimality of $\gamma_u$. We thus proved that $\gamma_{u_h}$ is $d$-minimizing and in particular $d(e,\gamma_{u_h}(1))= \ell(\gamma_{u_h}) \ge 3c^{-1}$.

 From \eqref{existence_time}, there exists $0 \le t_0 \le 1$ such that
 \begin{equation} \label{grater3}
 \lVert \pi(\gamma_h(t_0)) \rVert = c \cdot d(\id,\gamma_h(1)) \ge 3.
 \end{equation}
 Let $w \in \spn\set{X,Y}$ be such that $\rho(w,\pi(\gamma_h(t_0)))=0$ and $[w,\pi(\gamma_h(t_0))] = \left(\int^1_0 \lambda(t) \, \de t\right)Z$. We thus obtain 
 \begin{equation} \label{product_norm}
 \lVert w \rVert \cdot \lVert \pi(\gamma_h(t_0)) \rVert = \left\lvert \int^1_0 \lambda(t) \, \de t \right\rvert.
 \end{equation}
 We then consider the control $\widehat{u} \colon [0,3] \to \mathfrak{h}$ defined by
 \begin{equation*}
 \widehat{u}(t) = \begin{cases}
 w & \text{if $0 \le t \le 1$,} \\ u(t-1) & \text{if $1 < t \le 1+t_0 $,} \\ -w & \text{if $1+t_0 < t \le 2+t_0$,} \\ u(t-2) & \text{if $2+t_0 < t \le 3$.}
 \end{cases}
 \end{equation*}
Note that the curves $\gamma_{\hat{u}}$ and $\gamma_u$ have the same endpoints. We then estimate
 \begin{eqnarray*}
 \ell(\gamma_{\hat{u}}) &=& \ell(\gamma_{u_h}) + 2 \lVert w \rVert \\
 &\stackrel{\eqref{product_norm}}{=}& \ell(\gamma_{u_h}) + 2\frac{\left\lvert \int^1_0 \lambda(t) \, \de t \right\rvert}{\lVert \pi(\gamma_h(t_0)) \rVert} \\
 &\stackrel{\eqref{grater3}}{\le}& \ell(\gamma_{u_h}) + \frac{2}{3} \int^1_0 \lvert \lambda(t) \rvert \, \de t \\
 &\le& \ell(\gamma_{u_h}) + \int^1_0 \lvert \lambda(t) \rvert \, \de t = \ell(\gamma_u).
 \end{eqnarray*}
 From the minimality of $\gamma_u$, we get that $\ell(\gamma_{\hat{u}})=\ell(\gamma_u)$ and all inequalities in the previous estimates are equalities. In particular $\int^1_0 \lvert \lambda(t) \rvert \, \de t = 0$ and therefore $\gamma_u$ is a $\Delta$-horizontal curve. We conclude that $d(\id,q)=d_F(\id,q)$ and the statement is proved by setting $r_0 \coloneq 3c^{-1} + d(\id,Z)^2$.
\end{proof}

\subsection{Precise volume growth of the Riemannian Heisenberg group}
If $(G,d)$ is a 2-step nilpotent sub-Riemannian Lie group that is not a Carnot group, then \cref{asymptotic_sphere} implies that
\begin{equation} \label{double}
 Cr^Q + D_1r^{Q-2} + o(r^{Q-2}) \le \vol(B(r)) \le Cr^Q + D_2r^{Q-2} + o(r^{Q-2}), \quad \text{as $r \to \infty$,}
\end{equation}
for some $C,D_1,D_2,Q > 0$. We show that, for the Riemannian Heisenberg group, one can actually take $D_1=D_2$. In the following example, we explicitly compute that in this case one has 
 \begin{equation} \label{volume_heis}
 \vol(B(r))=C_4 r^4 + C_2 r^2 + C_0, \quad \text{for every $r > 2 \pi$,}
 \end{equation}
 for some constants $C_0,C_2,C_4 \neq 0$. In general, it is unclear if one should expect 
 $$\vol(B(r)) = Cr^Q + Dr^{Q-2} + o(r^{Q-2}), \quad \text{as $r \to \infty$.}$$
 to hold for every 2-step nilpotent sub-Riemannian Lie group, nor what can be said about the error term.
 \begin{example}
 We consider the left-invariant Riemannian metric $d_R$ on the Heisenberg group $\H$ such that $\set{X,Y,Z}$ is an orthonormal frame for the metric tensor $\rho$. We fix $r > 0$ and we study the volume of the metric ball
 \begin{equation*}
 B(r) \coloneq \set{ p \in \H \, \colon \, d_R(e,p) \le r}.
 \end{equation*}
 
 We identify $\H$ with $\R^3$ with the global diffeomorphism
 \begin{equation*}
 \Phi \colon \R^3 \to \H, \quad (x,y,z) \mapsto \exp(xX + yY + zZ).
 \end{equation*}
 We remark that the pushforward by $\Phi$ of the Lebesgue measure on $\R^3$ is a Haar measure on $\H$.
 
 With this identification, the function $f \coloneq d(e,\cdot)$ has a rotational symmetry around the $Z$ axis and a reflectional symmetry along the $XY$ plane. Therefore $B(r)$ is completely determined by 
 \begin{equation*}
 E(r) \coloneq \set{(x,0,z) \in \R^3 \, \colon x,z>0, \, d_R(e,(x,0,z))=r}.
 \end{equation*}
 As a consequence of the explicit integration of the geodesic equations \cite[Proposition 5.4]{MR3673666}, every point $p \in E(r)$ is reached by a unique length-minimizing curve from the identity element $e=(0,0,0)$, and such curve is of the form $\gamma_p =(x_p,y_p,z_p) \colon [0,t_p] \to \R^3$, where
 \begin{equation*}
 \begin{cases}
 x_p(t)=\frac{1}{k_p}\left((\cos \theta_p(\cos(k_pt)-1) - \sin \theta_p \sin(k_pt)\right) ,\\
 y_p(t)=\frac{1}{k_p}\left((\sin \theta_p(\cos(k_pt)-1) + \cos \theta_p \sin(k_pt)\right) ,\\
 z_p(t)= \frac{t}{2k_p}-\frac{1}{2k_p^2}\sin(k_pt)+k_pt \,,
 \end{cases}
 \end{equation*}
 for some $k_p>0$, $0 \le \theta_p < \pi$, and $0 < t_p < \frac{2\pi}{k_p}$. Therefore, we get
 \begin{equation*}
 p = \left(\frac{\sqrt{2-2\cos(k_pt_p)}}{k_p}, 0, \frac{k_pt_p - \sin(k_pt_p)}{2k_p^2}+k_pt_p\right).
 \end{equation*}
We observe that $\gamma_p$ is parametrized with constant speed $\sqrt{1+k_p^2}$, thus we require $t_p\sqrt{1+k_p^2}=r$.

 We set $\alpha_p \coloneq k_p t_p$. The condition $0 < t_p < \frac{2\pi}{k_p}$ implies $\alpha_p < 2 \pi$, whilst the condition $t_p\sqrt{1+k_p^2}=r$ implies $\alpha_p < r$. Vice-versa, we claim that for every $0 < \alpha < \min\set{2 \pi, r}$, there exists a unique $k>0$ and $0 < t < \frac{2\pi}{k}$ such that $kt=\alpha$ and $t\sqrt{1+k^2}=r$. Indeed, after some easy computations, these two conditions are equivalent to
 \begin{equation*}
 \frac{k}{\sqrt{1+k^2}}=\frac{\alpha}{r}, \quad t=\sqrt{r^2-\alpha^2},
 \end{equation*}
 which admit unique solutions $k,t>0$, since $\alpha < r$. Moreover, since $\alpha < 2 \pi$, we get $t < \frac{2 \pi}{k}$.

 We conclude that the curve $p \coloneq (0,\min\set{2 \pi,r}) \to \R^3$, defined by setting
 \begin{equation*}
 p(\alpha) \coloneq \left(\frac{\sqrt{r^2-\alpha^2}\sqrt{2-2\cos(\alpha)}}{\alpha},0,\frac{(r^2-\alpha^2)(\alpha-\sin(\alpha))}{2\alpha^2}+\alpha \right),
 \end{equation*}
 parametrizes the set $E(r)$.

 The volume $V(r)$ of the ball $B(r)$ is therefore twice the volume of the solid of revolution with profile $p(\alpha)$ around the $Z$ axis. Thus
 \begin{equation*}
 V(r) = 2 \pi \int_0^{\min\set{2\pi,r}} x(\alpha)^2 \frac{\partial z(\alpha)}{\partial \alpha} \, \de \alpha .
 \end{equation*}
 After some straightforward computations, we finally get
 \begin{align*}
 V(r) &= 2 \pi r^4 \int_0^{\min\set{2\pi,r}} \frac{1-\cos(\alpha)}{\alpha^2}\cdot \frac{2\sin(\alpha)-\alpha\cos(\alpha)-\alpha}{\alpha^3} \, \de \alpha \\ 
 & + 2 \pi r^2 \int_0^{\min\set{2\pi,r}} \frac{\sin^2(\alpha)}{\alpha^2} - \frac{(1 - \cos(\alpha))(2\sin(\alpha)-\alpha\cos(\alpha)-\alpha)}{\alpha^3} \, \de \alpha \\
 & - 2 \pi \int_0^{\min\set{2\pi,r}} \sin^2(\alpha) \, \de \alpha .
 \end{align*}
 We stress that, as expected $V(r)=O(r^3)$, as $r \to 0$. Whilst for $r \ge 2 \pi$, we get \eqref{volume_heis}
 for some $C_0,C_2,C_4 \neq 0$.
 \end{example}



\subsection{A vast class of rough isometries} \label{vast class}
\cref{equivalence quasi_isometries} allows us to determine a class of rough isometries made of automorphisms. Given a simply connected, $2$-step nilpotent sub-Riemannian Lie group $(G,d)$ and a map $\vphi \in \mathrm{Aut}(G)$, we define a new metric $d'$ as
 \begin{equation*}
 d'(p,q) := d(\vphi(p),\vphi(q)), \quad \text{for every $p,q \in G$.}
 \end{equation*}
 The metric space $(G, d')$ is again a sub-Riemannian Lie group, with sub-Riemannian structure given by the pull-back of the sub-Riemannian structure of $(G,d)$. Let us denote by $\lVert \cdot \rVert_\mathrm{ab}$ and $\lVert \cdot \rVert'_\mathrm{ab}$ the abelianization norms of $(G,d)$ and $(G,d')$, respectively.

 If we further assume that $\vphi_\mathrm{ab}$ is an isometry for $\lVert \cdot \rVert_\mathrm{ab}$, then $\lVert \cdot \rVert_\mathrm{ab} = \lVert \cdot \rVert'_\mathrm{ab}$ and \cref{equivalence quasi_isometries} implies that $\vphi$ is a rough isometry. We summarize our discussion in the following result.

 \begin{proposition} \label{many quasi isometries}
 Let $(G,d)$ be a simply connected, $2$-step nilpotent sub-Riemannian Lie group. If $\vphi \in \mathrm{Aut}(G)$ is such that $\vphi_\mathrm{ab}$ is an isometry for the abelianization norm of $(G,d)$, then $\vphi$ is a rough isometry. 
\end{proposition}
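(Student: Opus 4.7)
The plan is to reduce the statement directly to Theorem~\ref{equivalence quasi_isometries}. The first step is to introduce the pulled-back metric $d'$ on $G$ defined by $d'(p,q) \coloneq d(\vphi(p),\vphi(q))$. Since $\vphi$ is a Lie group automorphism, the identity $\vphi \circ L_h = L_{\vphi(h)} \circ \vphi$ shows that $d'$ is again left-invariant; moreover, $d'$ is the sub-Riemannian metric associated to the pull-back sub-Riemannian structure $(\de \vphi^{-1}(\Delta), \vphi^*\rho)$ on $G$. The statement that $\vphi$ is a rough isometry for $d$ then translates verbatim into the condition that there exists $C > 0$ such that $\lvert d'(p,q) - d(p,q) \rvert \le C$ for every $p,q \in G$.

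By Theorem~\ref{equivalence quasi_isometries} applied to the two sub-Riemannian metrics $d$ and $d'$ on $G$, this boundedness condition is in turn equivalent to the equality of the abelianization norms $\lVert \cdot \rVert_\mathrm{ab}$ and $\lVert \cdot \rVert'_\mathrm{ab}$ of $(G,d)$ and $(G,d')$. So the task reduces to proving
\begin{equation*}
    \lVert v \rVert'_\mathrm{ab} = \lVert \vphi_\mathrm{ab}(v) \rVert_\mathrm{ab}, \quad \text{for every } v \in G_\mathrm{ab}.
\end{equation*}
I would derive this identity from the submetry characterisation of the abelianization norm recalled before \cref{equality automorphism}: since $\pi_\mathrm{ab} : (G,d') \to (G_\mathrm{ab}, \lVert \cdot \rVert'_\mathrm{ab})$ is a submetry, one has $\lVert v \rVert'_\mathrm{ab} = \inf\{d'(\id,p) : \pi_\mathrm{ab}(p) = v\}$. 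Substituting $d'(\id,p) = d(\id,\vphi(p))$, changing variable to $q = \vphi(p)$, and using the intertwining identity $\pi_\mathrm{ab} \circ \vphi = \vphi_\mathrm{ab} \circ \pi_\mathrm{ab}$, the constraint $\pi_\mathrm{ab}(p) = v$ rewrites as $\pi_\mathrm{ab}(q) = \vphi_\mathrm{ab}(v)$; applying once more that $\pi_\mathrm{ab}:(G,d)\to(G_\mathrm{ab},\lVert\cdot\rVert_\mathrm{ab})$ is a submetry yields the asserted equality.

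The hypothesis that $\vphi_\mathrm{ab}$ is an isometry for $\lVert \cdot \rVert_\mathrm{ab}$ then gives $\lVert v \rVert'_\mathrm{ab} = \lVert \vphi_\mathrm{ab}(v) \rVert_\mathrm{ab} = \lVert v \rVert_\mathrm{ab}$, so the two abelianization norms coincide, and the conclusion follows from Theorem~\ref{equivalence quasi_isometries}. Every step is either a definition-chasing verification or an appeal to a result already established in the paper, so I do not anticipate any genuine obstacle; the only point requiring a little care is the submetry-based computation identifying the pull-back abelianization norm with the $\vphi_\mathrm{ab}$-transport of $\lVert \cdot \rVert_\mathrm{ab}$.
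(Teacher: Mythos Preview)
Your proposal is correct and follows exactly the argument the paper gives in the paragraph preceding the proposition: define the pull-back metric $d'$, observe it is sub-Riemannian, verify that the abelianization norm of $(G,d')$ coincides with $\lVert\cdot\rVert_\mathrm{ab}$ under the isometry hypothesis on $\vphi_\mathrm{ab}$, and apply \cref{equivalence quasi_isometries}. The paper asserts the equality of abelianization norms without detail, while you supply the submetry-based verification, but the strategy is identical.
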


Thanks to \cref{many quasi isometries}, we identify a class of rough isometries $\vphi \in \mathrm{Aut}(G)$ of a 2-step nilpotent, sub-Riemannian Lie group $(G,d)$ that are not at bounded distance from any isometry. To determine such maps, we make use of the following general result.

\begin{proposition} \label{bounded iso}
 Let $(G,d)$ be a simply connected, nilpotent, metric Lie group for which all bounded sets are precompact. Let $\vphi$ be an automorphism of $G$ such that $\vphi_\mathrm{ab}=\mathrm{id}$. The following are equivalent:
 \begin{enumerate}[(i)]
 \item $\vphi$ is at bounded distance from an isometry of $(G,d)$,
 \item $\vphi=C_g$ for some $g \in G$, where $C_g$ is defined by $C_g \colon h \mapsto ghg^{-1}$.
 \end{enumerate}
\end{proposition}

Our proof of \cref{bounded iso} relies on the following observation.

\begin{lemma} \label{bounded auto}
 Let $(G,d)$ be a simply connected, nilpotent, metric Lie group for which all bounded sets are precompact. If $\vphi, \psi \in \mathrm{Aut}(G)$ are at bounded distance, then $\vphi = \psi$.
\end{lemma}

\begin{proof}
 Assume $\vphi(p) \neq \psi(p)$ for some $p \in G$. 
 Consider the function $n\in \N \mapsto \vphi(p)^{-n}\psi(p)^n$, in exponential coordinates. By Baker-Campbell-Hausdorff formula (see \cite[Theorem~11.2.6]{hilgert2011structure}), 
 this function is a polynomial. Moreover, it is not constant since it has different values at $n=0$ and $n=1$.
 Consequently, it diverges as $n$ goes to infinity.
 By the assumption on $(G,d)$ the set is unbounded with respect to the distance $d$, therefore 
 \begin{equation*}
 d(\vphi(p^n),\psi(p^n)) = d( \id , \vphi(p)^{-n}\psi(p)^n) \to \infty, \quad \text{ as $n \to \infty$.}
 \end{equation*}
 Thus, we proved that the automorphisms $\vphi$ and $\psi$ are not at bounded distance.
\end{proof}

\begin{proof}[Proof of \cref{bounded iso}]
 First, assume that $\vphi=C_g$, for some $g \in G$. Then $\vphi$ is the composition of the map $L_g$, which is an isometry, and the right-translation $R_g$, which is at bounded distance from the identity map. We conclude that $\vphi$ is at bounded distance from the isometry $L_g$.

 Conversely, assume that $\vphi$ is at bounded distance from an isometry $f \colon (G,d) \to (G,d)$, i.e., there exists $C>0$ such that
 \begin{equation} \label{bounded 1}
 d(f(p),\vphi(p)) \le C, \quad \text{for every $p \in G$.}
 \end{equation}

Since $f$ is an isometry of a connected metric nilpotent Lie group, then $f$ is affine, i.e., it is of the form $f = L_g \circ \widetilde{f}$ for some $g \in G$ and some isometry $\widetilde{f} \in \mathrm{Aut}(G)$; see \cite[Theorem 1.2]{MR3646026}.

 We claim that the abelianization map $\widetilde{f}_\mathrm{ab}$ of $\widetilde{f}$ is the identity map. Indeed, since the map $\pi_\mathrm{ab}$ is a submetry for the quotient metric $d_\mathrm{ab}$, we get
 \begin{eqnarray*}
 C &\stackrel{\eqref{bounded 1}}{\ge}& d(f(p),\vphi(p)) \ge d_\mathrm{ab}(\pi_\mathrm{ab} \circ f (p),\pi_\mathrm{ab} \circ \vphi (p)) \\ &=& d_\mathrm{ab}(L_{\pi_\mathrm{ab}(g)} \circ \widetilde{f}_\mathrm{ab}(\pi_\mathrm{ab} (p)), \pi_\mathrm{ab} (p) ), \quad \text{for every $p \in G$,}
 \end{eqnarray*}
 where we used that $\vphi_\mathrm{ab}=\mathrm{id}$. Therefore,
 \begin{equation*}
 d_\mathrm{ab}(\widetilde{f}_\mathrm{ab} \circ \pi_\mathrm{ab} (p), \pi_\mathrm{ab} (p)) \le C + d_\mathrm{ab}(\id,\pi_\mathrm{ab}(g)), \quad \text{for every $p \in G$.}
 \end{equation*}
 The latter estimate implies that $\widetilde{f}_\mathrm{ab}$, which is a linear map on the abelian group $G_\mathrm{ab}$, is at bounded distance from the identity map. We conclude that $\widetilde{f}_\mathrm{ab}$ is the identity map. 
 Thanks to \cite[Lemma~2.5]{MR3646026}, the map $\widetilde{f}$, which is an isometry of $(G,d)$, is also an isometry of $(G,d_R)$, for some Riemannian metric $d_R$ on $G$. Since $\de_\id \widetilde{f}$ is a Lie algebra automorphism of $\g$, then $\de_\id \widetilde{f}([\g,\g])=[\g,\g]$. Since $\de_\id \widetilde{f}$ is an isometry for a Euclidean norm, we also get $\de_\id \widetilde{f}([\g,\g]^\perp)=[\g,\g]^\perp$. Thanks to \cref{equality automorphism}, we infer that $\widetilde{f}$ is the identity map as well, thus $f = L_g$ for some $g \in G$.

 In particular, since right-translations are at bounded distance from the identity map, we obtain that $\vphi$ is at bounded distance from $C_g \in \mathrm{Aut}(G)$. Thanks to \cref{bounded auto}, we conclude that $\vphi =C_g$.
 \end{proof}

\begin{remark}
 We point out that, in every simply connected, $2$-step nilpotent Lie group $G$, the map
\begin{equation} \label{vast class 2}
 \vphi \colon G \to G, \quad g \mapsto g \cdot L(\pi_\mathrm{ab}(g))
\end{equation}
is a Lie group automorphism of $G$, for every linear map $L \colon G_\mathrm{ab} \to [G,G]$. In particular, an automorphism $\vphi$ as in \eqref{vast class 2} satisfies $\vphi_\mathrm{ab}=\mathrm{id}$. We conclude that the class of Lie group automorphisms $\vphi$ satisfying the hypothesis of \cref{many quasi isometries} contains an abelian subgroup isomorphic to the vector space of linear maps from $G_\mathrm{ab}$ to $[G, G]$. 

A dimensional argument shows that, as soon as $\dim([G ,G])>1$ or $[G, G]$ is strictly contained in the center $Z(G)$ of $G$, then there are maps $\vphi \in \mathrm{Aut}(G)$ as in \eqref{vast class 2} that are not of the form $C_g$ for any $g \in G$. Combining \cref{many quasi isometries} with \cref{bounded iso}, we infer that those maps $\vphi$ are rough isometries that are not at bounded distance from any isometry.
\end{remark}

We point out that the class of simply connected Lie groups $G$ such that $\dim([G, G])=1$ and $Z(G)=[G, G]$ is precisely the family of Heisenberg groups $\H_n$, for $n \ge 1$.
As a consequence of the above discussion, we have the following result.

\begin{theorem}
 Let $G$ be a simply connected, $2$-step nilpotent Lie group that is not isomorphic to any Heisenberg group $\H_n$, for $n \ge 1$.
 Then, there is an automorphism $\vphi \in \mathrm{Aut}(G)$ that is a rough isometry and is not at bounded distance from any isometry.
\end{theorem}

The above result and the next examples are highly in contrast with the fact that in Euclidean spaces, every rough isometry (not necessarily an automorphism) is at bounded distance from an isometry; see \cite[Theorem~3]{MR511409}.

\begin{example} \label{final example}
 We exhibit an explicit example of a Lie group automorphism of a sub-Riemannian Lie group that is a rough isometry, but it is not at bounded distance from any isometry. We consider $G = \H \times \R$ to be the simply connected $2$-step nilpotent Lie group whose associated Lie algebra $\g$ admits a basis $\set{X,Y,Z,T}$ such that the only non-trivial bracket relation between elements of the basis is $[X,Y]=Z$.
 We consider on $G$ the sub-Riemannian structure $(\Delta,\rho)$ such that $\set{X,Y,T}$ is an orthonormal frame for $\rho$, and we denote by $d$ the corresponding sub-Riemannian metric. We identify $\H \times \R$ with $\R^4$ with the global diffeomorphism
 \begin{equation*}
 \Phi \colon \R^4 \to \H \times \R, \quad (x,y,z,t) \mapsto \exp(xX + yY + zZ + tT).
 \end{equation*}
Let $\vphi \in \mathrm{Aut}(G)$ be the Lie group automorphism such that, in coordinates,
 \begin{equation*} 
 \vphi(x,y,z,t)=(x,y,z+t,t).
 \end{equation*}
 On the one hand, we observe that $\vphi_\mathrm{ab}$ is the identity map and therefore, from \cref{many quasi isometries}, we get that $\vphi$ is a rough isometry. On the other hand $p = (0,0,0,1)$ is in the center of $G$, but $\vphi(p) \neq p$. We infer that $\vphi$ is not of the form $C_g$ for any $g \in G$. In view of \cref{bounded iso}, we conclude that $\vphi$ is not at bounded distance from an isometry.
\end{example}
\begin{example} \label{Giorgi example}
In the literature, there have been other constructions of rough isometries that are not at bounded distance from isometries. For example, it is easy to show that in the Heisenberg group equipped with any left-invariant geodesic distance, an example is
$$(x,y,z)\mapsto (x,y,z+|x|).$$
An observation is that on each of the two subsets $\{x\leq 0\}$ and $\{x\geq 0\}$, this map coincides with some conjugation.
Other examples in the Heisenberg group have been studied in the EPFL master thesis of Alessio Giorgi. We have that, for every Lipschitz function $\psi:\R^2\to \R$, the map $$(x,y,z)\mapsto (x,y,z+\psi(x,y))$$ is a
rough isometry and it is not at bounded distance from any isometry.
A classification of the rough isometries of the Heisenberg group is missing.
\end{example}

\bibliographystyle{abbrv}	
\bibliography{biblio}

\begin{thebibliography}{10}

\bibitem{MR3971262}
A.~Agrachev, D.~Barilari, and U.~Boscain.
\newblock {\em A comprehensive introduction to sub-{R}iemannian geometry},
  volume 181 of {\em Cambridge Studies in Advanced Mathematics}.
\newblock Cambridge University Press, Cambridge, 2020.
\newblock From the Hamiltonian viewpoint, With an appendix by Igor Zelenko.

\bibitem{MR3267520}
E.~Breuillard.
\newblock Geometry of locally compact groups of polynomial growth and shape of
  large balls.
\newblock {\em Groups Geom. Dyn.}, 8(3):669--732, 2014.

\bibitem{MR3153949}
E.~Breuillard and E.~Le~Donne.
\newblock On the rate of convergence to the asymptotic cone for nilpotent
  groups and sub{F}insler geometry.
\newblock {\em Proc. Natl. Acad. Sci. USA}, 110(48):19220--19226, 2013.

\bibitem{MR1835418}
D.~Burago, Y.~Burago, and S.~Ivanov.
\newblock {\em A course in metric geometry}, volume~33 of {\em Graduate Studies
  in Mathematics}.
\newblock American Mathematical Society, Providence, RI, 2001.

\bibitem{MR3793294}
Y.~d. Cornulier.
\newblock On the quasi-isometric classification of locally compact groups.
\newblock In {\em New directions in locally compact groups}, volume 447 of {\em
  London Math. Soc. Lecture Note Ser.}, pages 275--342. Cambridge Univ. Press,
  Cambridge, 2018.

\bibitem{fisher2024metricboundarytheorycarnot}
N.~Fisher.
\newblock A metric boundary theory for {C}arnot groups, 2024.

\bibitem{MR4237371}
N.~Fisher and S.~{Nicolussi Golo}.
\newblock Sub-{F}insler horofunction boundaries of the {H}eisenberg group.
\newblock {\em Anal. Geom. Metr. Spaces}, 9(1):19--52, 2021.

\bibitem{MR623534}
M.~Gromov.
\newblock Groups of polynomial growth and expanding maps.
\newblock {\em Inst. Hautes \'Etudes Sci. Publ. Math.}, 53:53--73, 1981.

\bibitem{MR511409}
P.~M. Gruber.
\newblock Stability of isometries.
\newblock {\em Trans. Amer. Math. Soc.}, 245:263--277, 1978.

\bibitem{hilgert2011structure}
J.~Hilgert and K.~Neeb.
\newblock {\em Structure and Geometry of Lie Groups}.
\newblock Springer Monographs in Mathematics. Springer New York, 2011.

\bibitem{MR3646026}
V.~Kivioja and E.~Le~Donne.
\newblock Isometries of nilpotent metric groups.
\newblock {\em J. \'Ec. polytech. Math.}, 4:473--482, 2017.

\bibitem{donne2024metricliegroupscarnotcaratheodory}
E.~Le~Donne.
\newblock {\em Metric {L}ie Groups, {C}arnot-{C}arath\'eodory spaces from the
  homogeneous viewpoint}, volume 306 of {\em Graduate Texts in Mathematics}.
\newblock Springer, Cham, 2025.

\bibitem{MR3739202}
E.~Le~Donne and S.~Nicolussi~Golo.
\newblock Regularity properties of spheres in homogeneous groups.
\newblock {\em Trans. Amer. Math. Soc.}, 370(3):2057--2084, 2018.

\bibitem{MR3673666}
E.~Le~Donne, S.~Nicolussi-Golo, and A.~Sambusetti.
\newblock Asymptotic behavior of the {R}iemannian {H}eisenberg group and its
  horoboundary.
\newblock {\em Ann. Mat. Pura Appl. (4)}, 196(4):1251--1272, 2017.

\bibitem{MR3906166}
E.~Le~Donne, A.~Pinamonti, and G.~Speight.
\newblock Universal differentiability sets and maximal directional derivatives
  in {C}arnot groups.
\newblock {\em J. Math. Pures Appl. (9)}, 121:83--112, 2019.

\bibitem{MR4490195}
E.~Le~Donne and F.~Tripaldi.
\newblock A cornucopia of {C}arnot groups in low dimensions.
\newblock {\em Anal. Geom. Metr. Spaces}, 10(1):155--289, 2022.

\bibitem{MR2332168}
G.~Margulis.
\newblock Metrics on reductive groups.
\newblock {\em Third quarter of 2006, Oberwolfach Rep. {\bf 3}, no. 3}, pages
  2004--2006, 2006.

\bibitem{MR1867362}
R.~Montgomery.
\newblock {\em A tour of subriemannian geometries, their geodesics and
  applications}, volume~91 of {\em Mathematical Surveys and Monographs}.
\newblock American Mathematical Society, Providence, RI, 2002.

\bibitem{MR741395}
P.~Pansu.
\newblock Croissance des boules et des g\'eod\'esiques ferm\'ees dans les
  nilvari\'et\'es.
\newblock {\em Ergodic Theory Dynam. Systems}, 3(3):415--445, 1983.

\bibitem{MR4380060}
K.~Tashiro.
\newblock On the speed of convergence to the asymptotic cone for non-singular
  nilpotent groups.
\newblock {\em Geom. Dedicata}, 216(1):Paper No. 18, 16, 2022.

\end{thebibliography}

\end{document}